\newcommand{\sK}{\mathscr K}
\newcommand{\ci}[1]{_{{}_{\!\scriptstyle{#1}}}}
\newcommand{\Be}{\begin{equation}}
\newcommand{\Ee}{\end{equation}}
\newcommand{\Bm}{\begin{multline}}
\newcommand{\Em}{\end{multline}}
\newcommand{\Bea}{\begin{eqnarray}}
\newcommand{\Eea}{\end{eqnarray}}
\newcommand{\Beas}{\begin{eqnarray*}}
\newcommand{\Eeas}{\end{eqnarray*}}
\newcommand{\Benu}{\begin{enumerate}}
\newcommand{\Eenu}{\end{enumerate}}
\newcommand{\Bi}{\begin{itemize}}
\newcommand{\Ei}{\end{itemize}}
\def\intslash{\rlap{\kern  .32em $\mspace {.5mu}\backslash$ }\int}
\def\qsl{{\rlap{\kern  .32em $\mspace {.5mu}\backslash$ }\int_{Q_x}}}
\def\vth{\vartheta}
\def\Q{\mathcal Q}
\def\emph#1{{\it #1 }}
\def\diam{{\text{\it  diam}}}
\def\supp{{\text{\rm supp}}}
\def\inn#1#2{\langle#1,#2\rangle}
\def\biginn#1#2{\big\langle#1,#2\big\rangle}
\def\noi{\noindent}
\def\meas{{\text{\rm meas}}}
\def\card{\text{\rm card}}
\def\lc{\lesssim}
\def\gc{\gtrsim}
\def\eps{\varepsilon}
\def\la{\lambda}             \def\La{\Lambda}
\def\fQ{{\mathfrak {Q}}}
\def\fS{{\mathfrak {S}}}
\def\bbN{{\mathbb {N}}}
\def\bbR{{\mathbb {R}}}
\def\bbZ{{\mathbb {Z}}}
\def\cA{{\mathcal {A}}}
\def\cC{{\mathcal {C}}}
\def\cE{{\mathcal {E}}}
\def\cF{{\mathcal {F}}}
\def\cS{{\mathcal {S}}}
\def\cT{{\mathcal {T}}}
\def\Q{{\hbox{\bf Q}}}
\def\be#1{\begin{equation}\label{ #1}}
\def\endeq{\end{equation}}
\def\endal{\end{align}}
\def\bas{\begin{align*}}
\def\eas{\end{align*}}
\def\bi{\begin{itemize}}
\def\ei{\end{itemize}}
\def\eps{\varepsilon}
\def\emph#1{{\it #1}}
\def\textbf#1{{\bf #1}}
\theoremstyle{plain}
   \newtheorem{theorem}{Theorem}[section]
   \newtheorem{proposition}[theorem]{Proposition}
   \newtheorem{lemma}[theorem]{Lemma}
   \newtheorem{theorem*}{Theorem}
\theoremstyle{remark}
\theoremstyle{definition}
\numberwithin{equation}{section}
\begin{document}

\title{A weak type   bound for a  singular integral}
\author{Andreas Seeger}

\address{
Department of Mathematics\\ University of Wisconsin-Madison\\Madison, WI 53706, USA}

\subjclass{42B15}

\begin{thanks} {Supported in part by NSF grant 1200261}
\end{thanks}

\begin{abstract} A weak type $(1,1)$ estimate is established for the
first order  $d$-commutator
introduced by Christ and Journ\'e, in dimension $d\ge 2$.
\end{abstract}

\maketitle

\section{Introduction}

Let $K$ be  regular Calder\'on-Zygmund convolution kernel on $\bbR^d$, $d\ge 2$,
{\it i.e.} $K\in \cS'$,  locally bounded
in $\bbR^d\setminus \{0\}$ and
  satisfies
\Be\label{czsize}
|K(x)|\le A|x|^{-d}
 \quad x\neq 0,
\Ee
and,  for some $\eps\in (0,1]$,
\Be \label{czreg}
 |K(x+h)-K(x)| \le A
|h|^\eps |x|^{-d-\eps}  \text{ if } |x|>2|h|;
\Ee
moreover $$\|\widehat K\|_\infty\le A<\infty.$$
Let $a\in L^\infty(\bbR^d)$.
The so-called {\it $d$-commutator} $T\equiv T[a]$ of first order associated with $K$ and $a$ is defined
for Schwartz functions $f$ by
$$T[a]f(x) = p.v.  \int K(x-y) \int_0^1 a(sx+(1-s) y) ds\,
  f(y) dy\,.
$$
In dimensions $d\ge 2$   this definition yields  a rough analog
of the Calder\'on commutator \cite{cald} in one dimension.
Christ and Journ\'e \cite{ch-j} proved that  $T$ and higher order versions extend to  bounded operators
 on $L^p(\bbR^d)$, for  $1<p<\infty$.
We prove  that the first order $d$-commutator is  also of weak type $(1,1)$.


\begin{theorem} \label{maintheorem} There is $C_{d}<\infty$ so that for any  $f\in L^1(\bbR^d)$ and any  $a\in L^\infty(\bbR^d)$,
$$\sup_{\la>0} \la\,
\meas\big(\{x\in \bbR^d:|T[a]f(x)|>\la\}\big) \le C_{d} A \tfrac{1}{\eps}\log(\tfrac 2\eps) 
 \|a\|_\infty \|f\|_{L^1(\bbR^d)}\,.
$$
\end{theorem}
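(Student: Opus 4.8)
The plan is to run a Calderón–Zygmund decomposition on $f$ at height $\lambda$, writing $f = g + b$ with $g$ the good part ($\|g\|_\infty \lesssim \lambda$, $\|g\|_1 \le \|f\|_1$) and $b = \sum_Q b_Q$ the bad part, where $b_Q$ is supported on a cube $Q$, has mean zero, satisfies $\|b_Q\|_1 \lesssim \lambda |Q|$, and $\sum_Q |Q| \lesssim \lambda^{-1}\|f\|_1$. The good part is handled by the $L^2$ theory of Christ–Journé: since $T[a]$ is bounded on $L^2$ (with norm $\lesssim A\|a\|_\infty$) one gets $\lambda^2 \meas(\{|T[a]g|>\lambda/2\}) \lesssim \|T[a]g\|_2^2 \lesssim (A\|a\|_\infty)^2 \|g\|_2^2 \lesssim (A\|a\|_\infty)^2 \lambda\|f\|_1$, which is the desired bound for $g$. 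The work is entirely in the bad part, and there the crucial difficulty — the reason this is genuinely harder than the standard CZ weak-type argument — is that $T[a]$ is \emph{not} a convolution operator: the kernel $K(x-y)\int_0^1 a(sx+(1-s)y)\,ds$ depends on $x$ and $y$ separately through the average of $a$ along the segment $[y,x]$, so the usual cancellation estimate $\int_{(5Q)^c} |k(x-y)-k(x-c_Q)|\,dx \lesssim 1$ that kills $b_Q$ fails outright.

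To organize the bad part I would split $b_Q$ according to the position of $x$ relative to $Q$. Fix the exceptional set to contain $\bigcup_Q 5Q$ (of measure $\lesssim \lambda^{-1}\|f\|_1$), so it suffices to estimate $\meas(\{x \notin \bigcup 5Q : |T[a]b(x)| > \lambda/2\})$. For each $Q$ and $x \notin 5Q$, write $T[a]b_Q(x) = \int K(x-y)\,\big(\int_0^1 a(sx+(1-s)y)\,ds\big)\,b_Q(y)\,dy$ and exploit the mean-zero property of $b_Q$ by subtracting the value of the kernel at $y = c_Q$. This produces two error terms: one from the regularity of $K$ — controlled by \eqref{czreg}, giving the standard gain $(\ell(Q)/|x-c_Q|)^\eps$ — and one new term from the variation of the inner average $\int_0^1 a(sx+(1-s)y)\,ds$ as $y$ ranges over $Q$. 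This second term is where the $\frac{1}{\eps}\log\frac 2\eps$ loss enters. Here I would decompose the segment average: for $y$ near $c_Q$ the segment $[y,x]$ differs from $[c_Q,x]$ by a small angular perturbation, and the difference of the two averages can be written as an integral of $a$ over a thin "sliver" between the two segments. The $L^\infty$ bound on $a$ combined with a careful measurement of the thickness of this sliver (of order $\ell(Q)$ transverse, stretched over length $|x-c_Q|$) gives — after summing the contribution over the relevant annulus around $Q$ — a bound $\lesssim A\|a\|_\infty \lambda |Q|$ for $\int_{(5Q)^c}|T[a]b_Q(x)|\,dx$, \emph{provided} one keeps careful track of the dyadic scales; the logarithmic factor appears because the sliver estimate is not summable over dyadic annuli without a loss, and this loss is of size $\log$ of the number of scales that contribute effectively, which after optimizing the regularity exponent contributions works out to $\frac1\eps\log\frac2\eps$.

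The step I expect to be the genuine obstacle is making the sliver/segment-average estimate quantitative and summable. Concretely, one must control, for $x$ in the dyadic annulus $2^k\ell(Q) \le |x-c_Q| < 2^{k+1}\ell(Q)$, the quantity $\int \big|\int_0^1 \big(a(sx+(1-s)y) - a(sx+(1-s)c_Q)\big)\,ds\big|\,|K(x-y)|\,|b_Q(y)|\,dy$, integrated in $x$ over that annulus. Since $a$ is merely bounded, there is no pointwise smallness; one must gain from the \emph{measure} of the set of $x$ for which the two segment averages differ substantially, i.e. a weak-type / distributional estimate on the inner average rather than a pointwise one — this is where the fractional-integration and maximal-function machinery of Christ–Journé (and the rotation/change-of-variables that turns the segment average into a one-dimensional average) has to be adapted to produce an $L^1_x$ rather than $L^2_x$ conclusion. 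A plausible route is to bound the bad part not in $L^1$ of the complement of $\bigcup 5Q$ directly but to first establish an $L^2$ estimate for $T[a]b_Q$ restricted far from $Q$ with a gain in the distance, then interpolate with the trivial bound via Chebyshev; the accounting of how the $\frac1\eps$ and the $\log\frac2\eps$ survive this interpolation is the delicate bookkeeping the proof must carry out. Once the per-cube bound $\sum_Q \int_{(5Q)^c}|T[a]b_Q| \lesssim A\,\frac1\eps\log\frac2\eps\,\|a\|_\infty\,\sum_Q \lambda|Q| \lesssim A\,\frac1\eps\log\frac2\eps\,\|a\|_\infty\,\|f\|_1$ is in hand, Chebyshev finishes the argument.
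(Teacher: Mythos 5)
Your setup (Calder\'on--Zygmund decomposition, $L^2$ theory for the good part) matches the paper, but the heart of your argument --- a per-cube bound $\int_{(5Q)^c}|T[a]b_Q|\,dx \lc A\tfrac1\eps\log\tfrac2\eps\,\|a\|_\infty\,\lambda|Q|$ obtained by subtracting the kernel at the center $c_Q$ and controlling the difference of segment averages by a ``sliver'' estimate --- is exactly the step that is not available for merely bounded $a$, and it is the reason this theorem is hard. Your sliver heuristic implicitly assumes that the two averages $\int_0^1 a(sx+(1-s)y)\,ds$ and $\int_0^1 a(sx+(1-s)c_Q)\,ds$ differ only through the measure of the thin region between the two segments; but $a\in L^\infty$ can oscillate at arbitrarily fine scales transverse to the segments, so the two averages sample $a$ at different points and their difference need not be small pointwise nor, uniformly in $a$, summably small in $L^1_x$ over the dyadic annuli. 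This is the analogue of the failure of the H\"ormander condition for rough homogeneous kernels $\Omega(x/|x|)|x|^{-d}$ with $\Omega\in L^\infty$: no per-cube $L^1$ cancellation estimate is known (or expected), which is why all existing proofs in this circle (Christ, Christ--Rubio de Francia, Hofmann, Tao, Grafakos--Honz\'ik, and this paper) must use $L^2$ information that couples different cubes and scales. Your fallback suggestion --- prove an $L^2$ bound for $Tb_Q$ with gain in the distance and ``interpolate with Chebyshev'' --- does not repair this: there is no trivial endpoint to interpolate against per cube, and a per-cube argument cannot exploit the orthogonality that actually saves the day.

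Concretely, the paper never proves your per-cube estimate. After discarding the scales $j-m\le n(\eps)$ by a trivial $L^1$ bound (this, not an annulus-summation loss, is where the factor $\tfrac1\eps\log\tfrac2\eps$ comes from), it mollifies $K_j$, restricts the $s$-integral, and then performs a microlocal decomposition of each $T^n_j$ into angular sectors $T^{n,\nu}_j$ together with the frequency cutoff $\fS_{n,\nu}$ adapted to $\nu^\perp$. Only the part $(I-\fS_{n,\nu})T^{n,\nu}_j$ admits an $L^1$ estimate with decay in $n$ (Proposition \ref{L1prop}, proved by Fourier-decomposing $a$ and integrating by parts, i.e.\ by quantifying that the segment average smooths $a$ in the direction $\nu$); the remaining part $\fS_{n,\nu}T^{n,\nu}_j$ is \emph{not} small in $L^1$ and is handled in $L^2$ (Proposition \ref{L2prop}) using the bounded overlap of the multipliers of $\fS_{n,\nu}$ and the disjointness of the CZ cubes together with $\|b_Q\|_1\lc\la|Q|$ --- no cancellation of $b_Q$ and no per-cube bound at all. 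Without some substitute for this $L^1/L^2$ splitting and the orthogonality argument, your outline cannot be completed; the ``delicate bookkeeping'' you defer is not bookkeeping but the missing main idea.
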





In two dimensions this result has recently been established by
Grafakos and Honz\'ik \cite{grafakos-honzik} (assuming $\eps=1$).
Their approach relies on a method developed in \cite{ch1}, \cite{ch-rdf} and \cite{hofmann} for proving  a weak type $(1,1)$ bound for rough singular convolution operators.  A dyadic decomposition $T[a]=\sum T_j$ is used on the kernel side, and the argument relies on the fact  that in two dimensions
the kernels of  the operators  $T_j^*T_i$ have certain
 H\"older continuity properties. This argument is no longer valid in
higher dimensions. It is conceivable that for  $d\ge 3$    
one might be able to develop  the more complicated  iterated $T^*T$ arguments
introduced by Christ and Rubio de Francia \cite{ch-rdf} and further extended  by Tao \cite{tao}, but
this route would lead to   substantial technical difficulties and we
 shall not pursue it.
Our approach is different and
relies on an idea
introduced in \cite{see}.  An  orthogonality argument
for a  microlocal decomposition of the operator  is  used.
The implementation of this idea in the present setting is
more complicated  in the convolution case  as the Christ-Journ\'e
operators  can be viewed  as an amalgam of
operators of generalized convolution type  (for which there is a
suitable calculus of wavefront sets)
 and operators of
multiplication  with a  rough function.


\noindent{\it Notation.} 
We write $\cE_1\lc \cE_2$ to indicate that $\cE_1\le C_d \cE_2$
for some \lq constant\rq \  $C$ that may  depend on $d$. We also use the notation $\lc_N$ to indicate dependence on other parameters  $N$. We denote by $\widehat f$ or $\cF f$ the Fourier transform of $f$, defined for Schwartz functions by 
 $\widehat f(\xi)= \int f(y)e^{-i\inn y\xi}dy$.

\noindent{\it This paper.} 
In  \S\ref{decompsect} we outline the proof of Theorem \ref{maintheorem}
with three technical  propositions 
\ref{secjunkL1},
\ref{L1prop},  \ref{L2prop} proved in 
\S \ref{easyest}, \S\ref{proofofL1prop}, \S\ref{proofofL2prop}, 
respectively.  In \S\ref{openproblems} we shall mention some open problems.

\section{Decompositions and auxiliary estimates}
\label{decompsect}
We may assume that $A\le 1$, $\|a\|_\infty\le 1$ and  write $T=T[a]$.
Fix $f\in L^1(\Bbb R^d)$.
We use the standard  Calder\'on-Zygmund decomposition of $f$ at
height $\la$ (see \cite{stein-si}). Then
$$f\,=\,g+b\,=\,g+\sum_{Q\in \fQ_\la} b_Q$$
where $\|g\|_\infty\le\la$, $\|g\|_1\lc\|f\|_1$, each
$b_Q$ is supported in a dyadic cube $Q$ with sidelength $2^{L(Q)}$ and center $y_Q$,
and $\fQ_\la$ is a family of dyadic cubes with  disjoint interiors.
Moreover $\|b_Q\|_1\lc\la |Q|$ for each $Q\in \fQ_\la$ and
$\sum_{Q\in \fQ_\la}|Q|\lc \la^{-1}\|f\|_1$. For each $Q$ let $Q^*$ be the
dilate of $Q$ with same center and $L(Q^*)=L(Q)+10$,
and let $E=\bigcup_{Q\in \fQ_\la} Q^*$. Then also
$$\meas(E)\lc\la^{-1} \|f\|_1.$$
Finally, for each $Q$, the mean value of $b_Q$ vanishes:
$$\int b_Q(y)dy=0.$$

Since $T$ is bounded on $L^2$ (\cite{ch-j}) we have, as in  
standard Calder\'on-Zygmund theory,
the estimate for the good function $g$
$$
\|Tg\|_2^2\le \|T\|_{L^2\to L^2}^2\|g\|_2^2\lc \|g\|_1 \|g\|_\infty\lc
\la\|g\|_1
$$
and by Tshebyshev's inequality,
$$
\big|\{x\in\Bbb R^d: |Tg(x)|>\la/10\}\big|\,\
\le \,100\la^{-2}\|Tg\|_2^2\,\lc \la^{-1}\|g\|_1\,\lc\,\la^{-1}\|f\|_1.
$$

We use a dyadic decomposition of the kernel.
Let $\varphi$ be a radial $C^\infty$ function, so that $\varphi(x)=1$ for $|x|\le 1$ and $\varphi(x)=0$
for $|x|\ge 6/5$.
Let  $$K_j(x)= \big(
\varphi(2^{-j}x)-\varphi(2^{-j+1}x)\big)
K(x) $$
so that $K=\sum K_j$ in the sense of distributions on $\bbR^d\setminus \{0\}$ and $K_j$ is supported in the annulus
$\{x: 2^{j-1}\le |x|\le \tfrac 65  2^j\}$.
Let  $T_j$ be the integral operator with Schwartz kernel
$$K_j(x-y) \int_0^1 a(sx+(1-s) y ) \,ds\,.$$
For $m\in\Bbb Z$ let
$$B_m=\sum_{\substack{Q\in \fQ_\la\\L(Q)=m}}b_Q.$$
Observe that
for each $j$, $m$ the function $T_j B_m$ belongs to $L^1$, and that
$$\supp(T_j B_m) \subset E, \quad  m\ge j.$$
Moreover, for each $n$,
 $$\sum_j \|T_j B_{j-n}\|_1 \lc \|f\|_1$$
and thus, if
$$n(\eps)= 10^{10}d\eps^{-1}\log_2(2\eps^{-1})
$$
we have 
by Tshebyshev's inequality
\begin{multline} \label{firstterms}
\meas\,\big(\big\{x\in \bbR^d: \sum_{0<n\le n(\eps)} \sum_j|T_j B_{j-n}(x)| 
 >\la/10\big\}\big)\\ \lc \,\eps^{-1} \log(2\eps^{-1}) \la^{-1}\|f\|_1.
\end{multline}
It thus suffices  to show that
$\sum_{n>n(\eps)}  (\sum_j T_j B_{j-n})$ converges in the topology of
 $(L^1+L^2)(\bbR^d\setminus E)$
and satisfies the inequality
\Be \label{mainineqoff}
\meas\,\big( \big\{x\in \bbR^d\setminus E: 
\sum_{n>n(\eps)} \big|\sum_jT_j B_{j-n}(x)\big|  
>4\la/5\big\}\big)\, \lc\, \la^{-1}\|f\|_1
\Ee

\subsection*{Finer decompositions}
We first slightly modify the kernel $K_j$ and subtract   an acceptable error
term which is small in $L^1$. In what follows assume $n>n(\eps)$ as defined above.
Let \Be\label{elln}\begin{aligned}
\ell(n)&=[2\log_2(n)]+2
\\
\ell_\eps(n)&= [2\eps^{-1} \log_2 n]+2.
\end{aligned}
\Ee
Let $\Phi$ be a radial $C^\infty_0$ function supported in $\{|x|\le 1\}$,
and satisfying $\int\Phi(x) dx=1$. Let $\Phi_m(x)= 2^{-md}\Phi(2^{-m}x)$.
Define  $$K_j^n= K_j * \Phi_{j-\ell_\eps(n)}\,.$$
Then $K_j^n$ is supported in $\{x:2^{j-2}\le |x|\le 2^{j+2}\}$, and, by the regularity assumption \eqref{czreg},
\begin{align}\notag
 \|K_j -  K_j^n\|_1 &\lc 2^{-(j-\ell_\eps(n))d}
\iint\limits_{\substack{|h|\le 2^{-(j-1-\ell_\eps(n))}\\2^{j-2}\le|x|\le 2^{j+2}}}
|K_j(x)-K_j(x-h)| \,dx\,dh
\\\label{KjminusKjn} &\lc 2^{-\ell_\eps(n)\eps} \lc n^{-2}\,.
\end{align}
By differentiation and \eqref{czsize}
\Be \label{Kjnder}
|\partial^\alpha K^n_j (x)|\le C_\alpha 2^{-jd} 2^{(\ell_\eps(n)-j)|\alpha|} \,.
\Ee
Let $\vth_n \in C^\infty(\bbR)$ be supported in $(n^{-2}, 1-n^{-2})$, such that
$\vth_n(s)=1$ for $s\in [2n^{-2}, 1-2n^{-2}]$, and such that the derivatives of
$\vth_n$ satisfy the natural estimates
\Be\label{thetaest} \|\vth_n^{(N)}\|_\infty \le C_N n^{2N}\,.
\Ee
We then let $T^n_j$
be the integral operator with Schwartz kernel
$$K_j^n(x-y) \int \vth_n(s) a(sx+(1-s) y)\, ds\,.$$
The following lemma is an immediate consequence of estimate \eqref{KjminusKjn} and the support property of $\vth_n$. 
\begin{lemma} \label{junkL1} The operator $T_j-T^n_j$ is bounded on $L^1$, with operator norm
$$\|T_j-T_j^n\|_{L^1\to L^1}\lc n^{-2}\,.$$
\end{lemma}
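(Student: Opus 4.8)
The plan is to split the difference $T_j - T_j^n$ into two contributions: one coming from replacing the kernel $K_j$ by the mollified kernel $K_j^n = K_j * \Phi_{j-\ell_\eps(n)}$, and one coming from inserting the cutoff $\vth_n(s)$ into the $s$-integral. For the first contribution, write the operator $R_j^1$ with kernel $(K_j - K_j^n)(x-y)\int_0^1 a(sx+(1-s)y)\,ds$. Since $\|a\|_\infty \le 1$, the inner average is bounded by $1$ in absolute value, so for $h \in L^1$ one has pointwise $|R_j^1 h(x)| \le \int |K_j - K_j^n|(x-y)\,|h(y)|\,dy = (|K_j-K_j^n| * |h|)(x)$, and Young's inequality together with \eqref{KjminusKjn} gives $\|R_j^1 h\|_1 \le \|K_j - K_j^n\|_1 \|h\|_1 \lc n^{-2}\|h\|_1$.

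For the second contribution, write the operator $R_j^2$ with kernel $K_j^n(x-y)\int_0^1 (1-\vth_n(s))\,a(sx+(1-s)y)\,ds$. Again $|a| \le 1$ and $|1-\vth_n(s)| \le 1$, supported in $s \in [0, 2n^{-2}] \cup [1-2n^{-2}, 1]$, a set of measure $\lc n^{-2}$; hence $\bigl|\int_0^1 (1-\vth_n(s))a(sx+(1-s)y)\,ds\bigr| \lc n^{-2}$. Therefore $|R_j^2 h(x)| \lc n^{-2} \int |K_j^n|(x-y)\,|h(y)|\,dy$, and since $\|K_j^n\|_1 \lc \|K_j\|_1 \lc 1$ (as $\Phi_m$ is an $L^1$-normalized mollifier and $K_j$ is bounded by $A|x|^{-d} \le |x|^{-d}$ on an annulus of inner and outer radii comparable to $2^j$, whence $\|K_j\|_1 \lc 1$), Young's inequality gives $\|R_j^2 h\|_1 \lc n^{-2}\|h\|_1$. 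Since $T_j - T_j^n = R_j^1 + R_j^2$, adding the two bounds yields $\|T_j - T_j^n\|_{L^1\to L^1} \lc n^{-2}$.

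There is no real obstacle here: the lemma is purely a matter of the triangle inequality, Young's convolution inequality, and the two quantitative inputs already recorded, namely the $L^1$ kernel bound \eqref{KjminusKjn} and the fact that $\{s : \vth_n(s) \ne 1\}$ has measure $\lc n^{-2}$ by construction. The only point worth a sentence is the uniform bound $\|K_j^n\|_1 \lc 1$, which follows from $\|K_j\|_1 \lc 1$ and $\|\Phi_m\|_1 = 1$; this is needed so that the estimate is genuinely uniform in $j$, which in turn is what makes the lemma useful when it is later summed in $j$.
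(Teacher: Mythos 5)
Your proof is correct and is exactly the argument the paper has in mind: the paper states the lemma as an immediate consequence of \eqref{KjminusKjn} and the support property of $\vth_n$, and your splitting into the kernel-mollification error (controlled by $\|K_j-K_j^n\|_1\lc n^{-2}$) and the $s$-cutoff error (controlled by the measure $\lc n^{-2}$ of $\{s:\vth_n(s)\neq 1\}$ together with $\|K_j^n\|_1\lc 1$) is precisely how those two inputs are meant to be combined.
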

The lemma implies
\begin{align*}
&\meas\,\big(\big\{x: \sum_{n>n(\eps)} \big|\sum_j(T_j B_{j-n}(x)-
T_j^n B_{j-n}(x))\big|  >\la/10\big\}\big)
\\
&\le\,10\la^{-1} \Big\|\sum_{n>n(\eps)}\sum_j  |T_j B_{j-n}-T_j^n B_{j-n}| \Big\|_1
\\
&\lc \,\la^{-1} \sum_{n\ge 1} n^{-2} \sum_j\|B_{j-n}\|_1
 \lc\, \la^{-1}\|f\|_1
\end{align*}
and therefore it is enough to show
\Be \label{mainineqoff2}
\meas\,\big(\big\{x: \sum_{n>n(\eps)} \sum_j|T_j^n B_{j-n}(x)|  > \tfrac {7}{10}\la\big\}\big) \lc \la^{-1}\|f\|_1\,.
\Ee

For the proof of \eqref{mainineqoff2} we
subtract various regular or small terms from
the operators $T_j^n$.
Let $\ell(n)$ be as  in \eqref{elln} and denote by $P_m$ the convolution operator with convolution kernel $\Phi_m$ (defined following \eqref{elln}).
We have
\begin{proposition} \label{secjunkL1}
For $n>1$,
$$\|P_{j-n+\ell(n)}
T^n_j B_{j-n}
\|_1 \lc n^{-2}\log n \|B_{j-n}\|_1\,.$$
\end{proposition}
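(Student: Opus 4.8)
The statement asserts a gain of $n^{-2}\log n$ (relative to $\|B_{j-n}\|_1$) when we apply the low-frequency projection $P_{j-n+\ell(n)}$ to $T_j^n B_{j-n}$. The point is that $P_{j-n+\ell(n)}$ cuts off frequencies much smaller than the natural frequency scale $2^{-j}$ of the kernel $K_j^n$, and $T_j^nB_{j-n}$ should be, apart from a controlled logarithmic loss, essentially supported at those higher frequencies because the atoms $b_Q$ making up $B_{j-n}$ have mean zero. So the plan is to exploit the cancellation of $b_Q$ together with the smoothness estimates \eqref{Kjnder} on $K_j^n$ and the smoothness estimates \eqref{thetaest} on $\vth_n$.

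\textbf{Key steps.} First I would fix one atom $b_Q$ with $L(Q)=j-n$, center $y_Q$, so that $Q$ has sidelength $\sim 2^{j-n}$, and analyze $P_{j-n+\ell(n)}T_j^nb_Q$. Write out the kernel: for $x$ in the support (which lies within distance $\sim 2^j$ of $Q$),
$$
T_j^nb_Q(x)=\int K_j^n(x-y)\Big(\int\vth_n(s)a(sx+(1-s)y)\,ds\Big)b_Q(y)\,dy .
$$
Using $\int b_Q=0$, subtract the value of the bracketed coefficient at $y=y_Q$; the difference involves $K_j^n(x-y)\int\vth_n(s)a(sx+(1-s)y)\,ds - K_j^n(x-y_Q)\int\vth_n(s)a(sx+(1-s)y_Q)\,ds$, but $a$ is only $L^\infty$, so one cannot differentiate in $y$ directly. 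Instead I would apply $P_{j-n+\ell(n)}$ first: since $P_m$ is convolution with $\Phi_m$, and $\Phi_m$ has integral $1$, one has for any function $F$ that $P_mF(x)-F(x)=\int(F(x-w)-F(x))\Phi_m(w)\,dw$, and iterating/Taylor-expanding $F=T_j^nb_Q$ in $x$ (where smoothness \emph{is} available: differentiating in $x$ hits either $K_j^n$, gaining $2^{\ell_\eps(n)-j}$ per derivative by \eqref{Kjnder}, or $a(sx+\cdots)$ which we avoid, or $\vth_n$ which does not depend on $x$). The difficulty is that differentiating the coefficient $\int\vth_n(s)a(sx+(1-s)y)ds$ in $x$ again meets the rough $a$. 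The resolution, as in \cite{see}, is to use the mean-zero property of $b_Q$ in $y$ and the change of variables absorbing the $s$-average: one writes $sx+(1-s)y = sx+(1-s)y_Q+(1-s)(y-y_Q)$ and expands in the \emph{small} parameter $(1-s)(y-y_Q)$, which has size $\lesssim 2^{j-n}$, against the scale $2^{j-\ell_\eps(n)}$ of $K_j^n$'s smoothness — but here one transfers the $y$-derivative onto $K_j^n$ via integration by parts against $b_Q$... which fails because $b_Q$ is not smooth. So the honest route: combine the $P_{j-n+\ell(n)}$ smoothing (frequencies $\le 2^{-(j-n+\ell(n))}$) with the cancellation of $b_Q$ to produce a factor $2^{-(j-n+\ell(n))}\cdot 2^{j-n}=2^{-\ell(n)}\sim n^{-2}$ from the difference of the two translates of the kernel-coefficient product, at the cost that the number of relevant "frequency bands" or the $\vth_n$-derivative cost contributes the extra $\log n$. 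Summing $\|P_{j-n+\ell(n)}T_j^n b_Q\|_1\lesssim n^{-2}\log n\,\|b_Q\|_1$ over the disjoint cubes $Q$ with $L(Q)=j-n$ then gives the Proposition.

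\textbf{Main obstacle.} The crux is precisely the tension between needing smoothness to exploit the low-frequency cutoff $P_{j-n+\ell(n)}$ and the roughness of $a$ in the coefficient $\int\vth_n(s)a(sx+(1-s)y)\,ds$. One cannot naively integrate by parts. The trick I expect the author to use is to let $P_{j-n+\ell(n)}$ act and rewrite $P_mT_j^nb_Q(x)$ as a double integral in $w$ (from $\Phi_m$) and $y$ (from $b_Q$), then use $\int b_Q(y)dy=0$ to insert the difference $K_j^n(x-w-y)-K_j^n(x-w-y_Q)$ — these kernel arguments differ by $y-y_Q$ of size $\lesssim 2^{j-n}$, while $K_j^n$ is smooth at scale $2^{j-\ell_\eps(n)}$, which is \emph{not} small enough to gain anything by itself. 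The gain must instead come from the $w$-integration: the key identity is that $\int\Phi_m(w)[\,\cdot\,]dw$ against a function smooth at scale $2^m$ returns something of size proportional to $2^{m}$ times a first $x$-derivative, and applied to $K_j^n(x-w-y)$ this derivative costs $2^{\ell_\eps(n)-j}$, producing $2^{m+\ell_\eps(n)-j}=2^{-n+\ell(n)+\ell_\eps(n)}$, which is \emph{not} summable-small because of the $\ell_\eps(n)$. Hence the actual argument must be subtler: it should use both the $w$-smoothing \emph{and} the $y$-cancellation simultaneously — the double difference $K_j^n(x-w-y)-K_j^n(x-w-y_Q)-K_j^n(x-y)+K_j^n(x-y_Q)$ is of size $\lesssim |w|\,|y-y_Q|\,2^{-jd}2^{2(\ell_\eps(n)-j)}\lesssim 2^{m}2^{j-n}2^{-jd}2^{2(\ell_\eps(n)-j)}$, and checking whether this, times the $L^\infty$ bound $1$ on the $a$-coefficient (whose contribution must be handled by a separate cancellation, accounting for the $\log n$), integrates to $\lesssim n^{-2}\log n$ over the $O(2^{jd})$-measure support is the decisive computation. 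I would expect the $\log n$ to arise from a dyadic sum over the range between the scale $2^{j-n}$ of $Q$ and the scale $2^{j-\ell_\eps(n)}$ of $K_j^n$ — about $\ell_\eps(n)-n \sim \eps^{-1}\log n$ many steps — optimized against the $\ell(n)\sim\log n$-bit gain from $P_{j-n+\ell(n)}$. That balancing of the two logarithmic scales $\ell(n)$ and $\ell_\eps(n)$ is where I expect the real work to lie.
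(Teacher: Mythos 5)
You have correctly located the crux --- the $L^\infty$ function $a$ blocks any attempt to difference or differentiate the coefficient $\int\vth_n(s)\,a(sx+(1-s)y)\,ds$ in $y$ --- but your proposal does not resolve it: it ends with a candidate double-difference bound whose ``decisive computation'' is left unchecked, and with the $y$-dependence of the $a$-coefficient delegated to ``a separate cancellation'' that is never specified. That is a genuine gap, and the route you sketch is not the one the paper takes. The missing idea is a change of variables that removes $y$ from the argument of $a$ altogether: write $P_{j-n+\ell(n)}T_j^n b_Q(x)$ as an integral in $(s,y,w)$ of $\vth_n(s)\,\Phi_{j-n+\ell(n)}(x-w)\,K_j^n(w-y)\,a(sw+(1-s)y)\,b_Q(y)$ and, for fixed $s$ in the support of $\vth_n$, substitute $z=w+\tfrac{1-s}{s}y$, so that $sw+(1-s)y=sz$. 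Then $a(sz)$ no longer sees $y$, and the entire $y$-dependence sits in the smooth factor $\cA^{x,z,s}_{j,n}(y)=\Phi_{j-n+\ell(n)}(x-z+\tfrac{1-s}{s}y)\,K_j^n(z-\tfrac{y}{s})$. Now the mean-zero property of $b_Q$ can be exploited exactly as you intended: replace $\cA^{x,z,s}_{j,n}(y)$ by $\cA^{x,z,s}_{j,n}(y)-\cA^{x,z,s}_{j,n}(y_Q)$, use the mean value theorem, and invoke $\|\nabla\Phi_{j-n+\ell(n)}\|_1\lc 2^{n-j-\ell(n)}$ and $\|\nabla K_j^n\|_1\lc 2^{\ell_\eps(n)-j}$ (the latter from \eqref{Kjnder}); with $\diam(Q)\lc 2^{j-n}$ this gives, for each fixed $s$, a contribution $\lc s^{-1}\big(2^{-\ell(n)}+2^{\ell_\eps(n)-n}\big)\|b_Q\|_1$.

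The factor $s^{-1}$ produced by the substitution (and by the derivatives falling on arguments scaled by $1/s$) is what generates the logarithm: since $\vth_n$ vanishes for $s\le n^{-2}$, one has $\int|\vth_n(s)|\,s^{-1}\,ds\lc\log n$, and $2^{-\ell(n)}+2^{\ell_\eps(n)-n}\lc n^{-2}$, after which summing over the cubes $Q$ with $L(Q)=j-n$ finishes the proof. In particular, your guesses about the mechanism are off in two ways: the gain does not come from combining the $P_{j-n+\ell(n)}$-smoothing with the $y$-cancellation through a double difference of translates of $K_j^n$ (the two gradient terms enter additively, one from $\Phi_{j-n+\ell(n)}$ and one from $K_j^n$, and there is no need to balance $\ell(n)$ against $\ell_\eps(n)$); and the $\log n$ does not arise from a dyadic sum over scales between $2^{j-n}$ and $2^{j-\ell_\eps(n)}$, but purely from the $s\sim 0$ endpoint permitted by the support $(n^{-2},1-n^{-2})$ of $\vth_n$. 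Also note that the term $2^{\ell_\eps(n)-n}$, which you dismissed as ``not summable-small because of the $\ell_\eps(n)$,'' is in fact harmless in the range where the proposition is used ($n>n(\eps)$, where $2^{\ell_\eps(n)}\lc 2^{n/2}$).
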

The proposition will be proved in \S\ref{easyest}. It  yields
\begin{align*}
&\meas\,\big(\big\{x\in \bbR^d\setminus E: \sum_{n>n(\eps)} \big|\sum_j P_{j-n+\ell(n)}
T_j^n B_{j-n}(x))\big|  >\la/10\big\}\big)
\\
&\lc\,10 \la^{-1}\sum_{n>n(\eps)} \sum_j \| P_{j-n+\ell(n)}
T^n_j B_{j-n} \|_1\\
&\lc \,\la^{-1} \sum_{n>1} n^{-2}\log n \sum_j\|B_{j-n}\|_1
 \lc \la^{-1}\|f\|_1
\end{align*}
and thus it remains to consider the term
\Be \label{mainterm}\sum_{n>n(\eps)}\sum_j (I-P_{j-n+\ell(n)} )
T_j^n B_{j-n}(x)\Ee  and to estimate the measure of the set where
$|\text{\eqref{mainterm}}|> 3\la/5$.
We shall need to exploit the fact that the integral
$\int_0^1 a(sx+(1-s)y) ds$ smoothes the rough function $a$ in the direction parallel to $x-y$, and use a microlocal decomposition which we now describe.

Let $1/10
<\gamma<9/10$ (say $\gamma=1/2$),
and let $\Theta_n$ be set of unit vectors
with the property that if $\nu\neq \nu'$, $\nu,\nu'\in \Theta_n$ then
$|\nu-\nu'|\ge  2^{-4-n\gamma}$, and assume that $\Theta_n$ is {\it maximal} with respect to this property. Note that
$$\card(\Theta_n)\lc 2^{n\gamma(d-1)}\,.$$ For each $\nu$ we may choose a function 
$\widetilde \chi_{n,\nu}$ 
 on $C^\infty(S^{d-1})$ with the property that
$\widetilde \chi_{n,\nu}(x)\ge 0$, $\widetilde \chi_{n,\nu}(\theta)=1$ if $|\theta-\nu|\le  2^{-3-n\gamma}$,
$\widetilde \chi_{n,\nu}(\theta)=0$ if $|\theta-\nu|>  2^{-2-n\gamma}$,
and such that for each $M\in \bbN$  the functions 
$2^{-n\gamma M}\widetilde \chi_{n,\nu}$ form a bounded family in $C^M(S^{d-1})$.
For each $\theta$ there is at least one $\nu$ such that 
$\widetilde \chi_{n,\nu}(\theta)=1$, 
by the maximality assumption, moreover by the separatedness  assumption the number of $\nu\in \Theta_n$ for which $\widetilde \chi_{n,\nu}(\theta)\neq 0$ is bounded above, uniformly in $\theta$ and $n$. 
Define, for $\nu\in \Theta_n$
$$\chi_{n,\nu}(x)= \frac{\widetilde \chi_{n,\nu}(\tfrac{x}{|x|})}
{\sum_{\nu'\in \Theta_n}\widetilde \chi_{n,\nu'}(\tfrac{x}{|x|})}\,.
$$
Then $\sum_{\nu\in \Theta_n} \chi_{n,\nu}(x)=1$
for every $x\in \bbR^d\setminus\{0\})$ and by homogeneity we have the following estimates
for multiindices $\alpha$ and $x\neq 0$,
\begin{align*}
|(\inn{\nu}{\nabla})^M \chi_{n,\nu}(x)| &\le C_M |x|^{-M}\,,
\\|\partial^\alpha \chi_{n,\nu}(x)|\,&\le\,  C_\alpha 2^{n\gamma|\alpha|}|x|^{-|\alpha|}\,.
\end{align*}

Let $K_j^{n,\nu}(x)= K_j^n(x) \chi_{n,\nu}(x)$ and let $T_j^{n,\nu}$ be the operator with Schwartz kernel
$$K_j^{n,\nu}(x-y) \int \vth_n(s)\, a(sx+(1-s) y)\, ds\,.$$
We then have $$T^n_j=\sum_{\nu\in \Theta_n} T^{n,\nu}_j\,.$$
Let $\phi\in C^\infty(\bbR)$ so that $\phi(u)=1$ for $|u|<1/2$ and $\phi(u)=0$ for
$|u|\ge 1$ and define the singular convolution operator $\fS_{n,\nu}$ by
$$\widehat {\fS_{n,\nu} f}(\xi)= \phi\big(2^{n\gamma} n^{-5} \inn{\nu}{\tfrac{\xi}{|\xi|}}\big)\widehat f(\xi).$$

The terms involving $(I-\fS_{n,\nu})T_j^{n,\nu} $ can be dealt with by $L^1$ estimates. In \S\ref{proofofL1prop} we shall prove

\begin{proposition} \label{L1prop}
For $n>n(\eps)$, $\nu \in \Theta_n$,
$$\Big\|\sum_{j} 
(I-P_{j-n+\ell(n)} ) (I-\fS_{n,\nu})T_j^{n,\nu} 
B_{j-n}\Big\|_1 \lc
n^{-2} 2^{-n\gamma(d-1)} \|f\|_1\,.
$$
\end{proposition}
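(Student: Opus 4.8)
By the triangle inequality in $L^1$ it suffices to bound each summand, so the claim reduces to the fixed-$j$ estimate
$$\big\|(I-P_{j-n+\ell(n)})(I-\fS_{n,\nu})\,T^{n,\nu}_j\big\|_{L^1\to L^1}\;\lc\; n^{-2}\,2^{-n\gamma(d-1)}\qquad(n>n(\eps),\ \nu\in\Theta_n),$$
since then summation in $j$ together with the Calder\'on--Zygmund inequality $\sum_j\|B_{j-n}\|_1\lc\|f\|_1$ gives the Proposition. Two inputs are elementary: first, $\|K^{n,\nu}_j\|_1\lc 2^{-n\gamma(d-1)}$, because $K^n_j$ is $O(2^{-jd})$ on an annulus of measure $O(2^{jd})$ and $\chi_{n,\nu}$ restricts its support to a cone of solid angle $\lc 2^{-n\gamma(d-1)}$; second, $I-P_{j-n+\ell(n)}$ (convolution with $\delta-\Phi_{j-n+\ell(n)}$) is bounded on $L^1$ with constant $O(1)$, and $I-\fS_{n,\nu}$ is a Calder\'on--Zygmund operator with $0$-homogeneous symbol that is smooth at angular scale $2^{-n\gamma}n^{5}$ near the hyperplane $\{\inn\nu\xi=0\}$.

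The heart of the matter is the gain of $n^{-2}$. Writing $T^{n,\nu}_jb(x)=\int\vth_n(s)\big[\int K^{n,\nu}_j(x-v)\,a(sx+(1-s)v)\,b(v)\,dv\big]ds$ and taking the $x$-Fourier transform one gets
$$\wh{T^{n,\nu}_jb}(\xi)=c\iint\vth_n(s)\,\wh a(\xi-\zeta)\,\wh b(\zeta)\,\wh{K^{n,\nu}_j}\big((1-s)\zeta+s\xi\big)\,ds\,d\zeta\,.$$
By \eqref{Kjnder}, $K^{n,\nu}_j$ is smooth at scale $2^{j-\ell_\eps(n)}$ in the $\nu$-direction (and $2^{j-n\gamma}$ in the orthogonal ones), so $\wh{K^{n,\nu}_j}$ is rapidly decreasing outside the thin slab $\{|\inn\nu\eta|\lc 2^{\ell_\eps(n)-j},\ |\eta^\perp|\lc 2^{n\gamma-j}\}$. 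On the symbol support of $(I-P_{j-n+\ell(n)})(I-\fS_{n,\nu})$ one has $|\xi|\gc 2^{\,n-j-\ell(n)}$ and $|\inn\nu\xi|\gc 2^{-n\gamma}n^{5}|\xi|$, hence $|\inn\nu\xi|\gc n^{3}2^{\,n(1-\gamma)-j}$, which — since $n(\eps)$ is large — dwarfs $2^{\ell_\eps(n)-j}$. Thus the constraint $(1-s)\zeta+s\xi\in\text{slab}$ forces $(1-s)\inn\nu\zeta\approx-s\inn\nu\xi$ and $|\inn\nu{\xi-\zeta}|\gc|\inn\nu\xi|$, so for fixed $\xi,\zeta$ the non-negligible part of the $s$-integral is confined to an interval of length $\lc 2^{\ell_\eps(n)-j}/|\inn\nu{\xi-\zeta}|\lc 2^{\ell_\eps(n)-\ell(n)}n^{-3}2^{-n(1-\gamma)}\lc n^{-2}$, again by the choice of $n(\eps)$. (Equivalently: after the substitution $u=sx+(1-s)v$ the kernel carries a phase whose $s$-derivative is $\gc|\inn\nu\xi|\,|x-v|\gc n^{3}2^{\,n(1-\gamma)}$ on the relevant set — recall $x-v$ lies in the $2^{-n\gamma}$-cone about $\nu$ with $|x-v|\sim 2^j$ — and one integrates by parts in $s$ using \eqref{thetaest} and \eqref{Kjnder}; this is the form I would actually use, as it never requires interpreting $\wh a$.)

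Assembling, the kernel of $(I-P_{j-n+\ell(n)})(I-\fS_{n,\nu})T^{n,\nu}_j$ equals $\int\check m(x-x')K^{n,\nu}_j(x'-v)A(x',v)\,dx'$ with $m$ the composed symbol and $A(x',v)=\int\vth_n(s)a(sx'+(1-s)v)\,ds$; after the $s$-integration by parts above, $I-\fS_{n,\nu}$ is applied only to smooth, rapidly decaying factors localized in frequency near the flat hyperplane $\{\inn\nu\xi=0\}$, and a crude Fourier-support bookkeeping together with $\|K^{n,\nu}_j\|_1\lc 2^{-n\gamma(d-1)}$ and the $L^1$-boundedness of $I-P_{j-n+\ell(n)}$ yields the estimate. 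The main obstacle is precisely that $T^{n,\nu}_j$ is an amalgam of a generalized convolution (in $x-v$) and multiplication by the merely bounded function $a$: through the roughness of $a$ in directions orthogonal to $\nu$, $T^{n,\nu}_jb$ a priori has Fourier content spread over the equatorial band where $I-\fS_{n,\nu}$ lives, so neither Fourier nor kernel estimates close the argument on their own. The resolution rests on the fact that the $s$-averaging smooths $a$ along the \emph{radial} direction $(x-v)/|x-v|$, which agrees with $\nu$ only up to $O(2^{-n\gamma})$; one must check that this $2^{-n\gamma}$-spreading of the wavefront set is still absorbed by the wider, $2^{-n\gamma}n^{5}$-scale cut-off defining $\fS_{n,\nu}$, and that on the complementary, genuinely non-equatorial part the phase really does oscillate at rate $\gc n^{3}2^{\,n(1-\gamma)}$ — both guaranteed by the choice of $n(\eps)$ and of the exponents in the definitions of $\Theta_n$ and $\fS_{n,\nu}$.
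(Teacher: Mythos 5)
Your reduction to a fixed-$j$ operator norm bound, $\|(I-P_{j-n+\ell(n)})(I-\fS_{n,\nu})T^{n,\nu}_j\|_{L^1\to L^1}\lc n^{-2}2^{-n\gamma(d-1)}$, discards exactly the structure on which the paper's proof runs, and nothing in your sketch proves it. The paper establishes no such unconditional $L^1$ bound: its dominant contribution is the low-frequency piece $V_{j-n+\ell(n)}a$ of the split \eqref{asplit}, and there the factor $n^{-2}$ is produced in Lemma \ref{firstcancel} by playing the vanishing mean of $b_Q$ and $\diam(Q)\approx 2^{j-n}$ against kernels smooth at scale $2^{j-n+\ell(n)}$, giving $2^{-\ell(n)}\approx n^{-2}$; neither $s$-oscillation nor the angular cut-offs contribute to that term. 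Your two mechanisms genuinely stall in this regime: if $a$ oscillates at spatial scale $\approx 2^{j-n\gamma}$ transversally to $\nu$ with little variation along $\nu$ (frequencies $|\eta|\approx 2^{n\gamma-j}$, $|\inn{\nu}{\eta}|\lc n^{2}2^{-j}$), then $\inn{x-v}{\eta}=O(n^{2})$ so there is no gain from integrating by parts in $s$ against $\vth_n$ (whose derivatives cost $n^{2N}$ by \eqref{thetaest}), the output can sit essentially at the $\fS_{n,\nu}$ threshold, and the only gain left is the $O(2^{-n(1-\gamma)})$ symbol factor from $I-P_{j-n+\ell(n)}$. Meanwhile, for merely bounded $a$ the phrase ``Fourier content of $a$'' has no content until you localize $a$ (per cube, as the paper does, yielding \eqref{FTaest}) and pay $\|\widehat a\|_\infty$ times the measure of the relevant frequency shell, which in this corner is $\approx 2^{n\gamma}$; the net is $\approx 2^{n(2\gamma-1)}$, which at $\gamma=1/2$ is not even bounded by a constant, let alone $n^{-2}$. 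The paper escapes precisely by \emph{not} frequency-decomposing $a$ there and using the cancellation of $b_Q$ instead --- the ingredient your reduction throws away. So the central reduction is both unproven and, by your own methods, apparently not closable.

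For the high-frequency part of $a$ your mechanisms are indeed the right ones (they are the content of Lemmas \ref{VandLP} and \ref{VandLPsamek}), but the sketch skips the case in which your phase bound fails: the $s$-derivative of the phase is $\inn{x-v}{\eta}$ with $\eta$ the frequency of $a$, and since $x-v$ only lies within $2^{-n\gamma}$ of the direction $\nu$, a large transversal component of $\eta$ can cancel the term coming from $\inn{\nu}{\eta}$, so ``$(1-s)\inn{\nu}{\zeta}\approx -s\inn{\nu}{\xi}$ implies oscillation at rate $\gc|\inn\nu\xi|\,|x-v|$'' is not justified. This is exactly why the paper introduces the second, narrower angular cut-off $S^{n,\nu}$ (at angular scale $n^{2}2^{-n\gamma}$) in Lemma \ref{VandLPsamek}: only for $(I-S^{n,\nu})a$ can one integrate by parts in $s$, using \eqref{scprdctwitheta}, while for $S^{n,\nu}a$ one must instead integrate by parts in $w$ along $\nu$, using \eqref{lowerbddenom}; your single dichotomy has no counterpart for this splitting. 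Finally, the bookkeeping you defer (turning pointwise statements about nominal Fourier supports, with non-compactly supported symbols and a distributional $\widehat a$, into $L^1(dx)$ bounds, and summing over the frequency shells of $a$ against the losses \eqref{FTaest} forces) is precisely the work carried out by the paper's double Littlewood--Paley decomposition in $(k_1,k_2)$; also, in your Fourier formula the argument of $\wh{K^{n,\nu}_j}$ should be $(1-s)\xi+s\zeta$ rather than $(1-s)\zeta+s\xi$ --- harmless for your heuristic, but indicative of the level of care this argument requires.
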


For the rougher terms  involving
$\fS_{n,\nu} T_j^{n,\nu} $ we shall prove  
in \S\ref{proofofL2prop}
the following $L^2$ estimate.

\begin{proposition} \label{L2prop}
For $n>n(\eps)$,
$$\Big\|\sum_{\nu\in \Theta_n} \sum_j 
(I-P_{j-n+\ell(n)} ) \fS_{n,\nu} T_j^{n,\nu} B_{j-n}\Big\|_2^2 \lc
2^{-n\gamma} n^5 \la \|f\|_1\,.
$$
\end{proposition}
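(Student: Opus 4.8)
The plan is to expand the square and estimate the resulting double sum over $(\nu,j)$ and $(\nu',j')$ using almost-orthogonality. The key point to exploit is that $\fS_{n,\nu}T_j^{n,\nu}$ is a microlocalized piece: after the cutoff $\chi_{n,\nu}$ in physical space and the Fourier multiplier $\phi(2^{n\gamma}n^{-5}\inn{\nu}{\xi/|\xi|})$, together with the averaging $\int\vth_n(s)a(sx+(1-s)y)\,ds$ which smooths $a$ in the $\nu$-direction, the operator $\fS_{n,\nu}T_j^{n,\nu}$ has Schwartz kernel whose $x$-Fourier support is essentially contained in a cone of angular width $\sim 2^{-n\gamma}n^5$ around $\nu^\perp$ and whose frequency modulus lives near $2^{-j}$ (coming from $\widehat{K_j^n}$ being concentrated at scale $2^{-j}$, modulo the low-frequency part removed by $(I-P_{j-n+\ell(n)})$, which cuts out $|\xi|\lesssim 2^{-j+n-\ell(n)}$). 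I would first record this frequency-localization statement as a lemma: $(I-P_{j-n+\ell(n)})\fS_{n,\nu}T_j^{n,\nu}B_{j-n}$ has $x$-side Fourier transform supported in a set $\Xi_{j,\nu}$ of the form $\{\xi : 2^{-j+n-\ell(n)}\lesssim|\xi|\lesssim 2^{-j+C}, \ |\inn{\nu}{\xi/|\xi|}|\lesssim 2^{-n\gamma}n^5\}$; here one must check that multiplication by the rough function $a$ — which is what spoils frequency localization in general — only enlarges the cone by the harmless factor coming from $\vth_n$ and the $s$-integration, using the identity that differentiating $s\mapsto a(sx+(1-s)y)$ along $x-y$ converts $\nu$-derivatives on the multiplier into $s$-derivatives absorbed by $\vth_n$ (cost $n^2$ per derivative, $\ell_\eps(n)$-many of them, giving $n^{O(1)}$).

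Granting the localization, I would split the almost-orthogonality into two layers. For fixed angular sector $\nu$, the pieces with different $j$ have $x$-Fourier supports in disjoint (up to bounded overlap) dyadic shells $|\xi|\sim 2^{-j}$, so
\[
\Big\|\sum_j (I-P_{j-n+\ell(n)})\fS_{n,\nu}T_j^{n,\nu}B_{j-n}\Big\|_2^2 \lc \sum_j \big\|(I-P_{j-n+\ell(n)})\fS_{n,\nu}T_j^{n,\nu}B_{j-n}\big\|_2^2,
\]
up to the multiplicity constant. Then for fixed $j$, the supports $\Xi_{j,\nu}$ over $\nu\in\Theta_n$ are slabs of angular width $2^{-n\gamma}n^5$ through the origin; since the $\nu$ are $2^{-n\gamma}$-separated, each point of the sphere lies in at most $O(n^5)$ of the dilated slabs, so summing in $\nu$ costs a factor $n^5$:
\[
\Big\|\sum_{\nu}\sum_j (I-P_{j-n+\ell(n)})\fS_{n,\nu}T_j^{n,\nu}B_{j-n}\Big\|_2^2 \lc n^5 \sum_{\nu}\sum_j \big\|\fS_{n,\nu}T_j^{n,\nu}B_{j-n}\big\|_2^2.
\]
It then remains the single-piece $L^2$ bound $\|\fS_{n,\nu}T_j^{n,\nu}B_{j-n}\|_2^2 \lc 2^{-n\gamma}\la\|B_{j-n}\|_1$ (summed over $j$ it gives $2^{-n\gamma}\la\|f\|_1$ after recalling $\sum_j\|B_{j-n}\|_1\lc\|f\|_1$; note $n^5$ times this yields the claimed bound). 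For this I would bound $\|\cdot\|_2$ by duality against $h\in L^2$: write $\inn{\fS_{n,\nu}T_j^{n,\nu}B_{j-n}}{h} = \sum_{Q: L(Q)=j-n}\inn{T_j^{n,\nu}b_Q}{\fS_{n,\nu}^*h}$, use the cancellation $\int b_Q=0$ to gain, for each $Q$, a factor from the smoothness of $K_j^{n,\nu}$ over the cube $Q$ (size $2^{j-n}$ against kernel scale $2^{-j}$ times the angular localization, i.e. the crucial gain $2^{-n\gamma}$ enters because $\fS_{n,\nu}$ forces $\fS_{n,\nu}^*h$ to be slowly varying — of frequency $\lesssim 2^{-n\gamma}n^{-5}2^{-j}$ — in the $\nu$-direction, which is the direction along which $T_j^{n,\nu}b_Q$ has its support of length $2^j$), combine with $\|b_Q\|_1\lc\la|Q|$, Cauchy–Schwarz in $Q$, and $\sum_Q|Q|\lc\la^{-1}\|f\|_1$.

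The main obstacle, and where the real work lies, is the single-piece estimate together with rigorously establishing that multiplication by the $L^\infty$ function $a$ does not destroy the microlocalization — i.e., controlling the commutator between $\fS_{n,\nu}$ (a Fourier multiplier) and the amalgam operator $T_j^{n,\nu}$ whose kernel depends on the rough $a$. The mechanism is that the line integral $\int\vth_n(s)a(sx+(1-s)y)\,ds$ is genuinely smooth along $x-y$, hence along $\nu$ on the support of $\chi_{n,\nu}$, with derivative bounds degrading only like powers of $n$; making this precise — and checking that all the $n$-losses from $\vth_n$, from $\ell_\eps(n)$ mollification levels, and from the $\phi$-cutoff at scale $2^{n\gamma}n^{-5}$ combine to at most the stated $n^5$ — is the technical heart of the proof. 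Everything else (the two orthogonality reductions and the $\int b_Q=0$ argument) is routine Calderón–Zygmund bookkeeping.
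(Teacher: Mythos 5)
Your argument has a genuine gap at its very first step. The claimed radial frequency localization — that $(I-P_{j-n+\ell(n)})\fS_{n,\nu}T_j^{n,\nu}B_{j-n}$ has Fourier support in a shell $|\xi|\sim 2^{-j}$ — is false. $T_j^{n,\nu}$ is not a convolution operator: the amplitude $\int\vth_n(s)a(sx+(1-s)y)\,ds$ is, as a function of $x$, completely rough in the $d-1$ directions transverse to $x-y\approx\nu$, so the output's Fourier transform is spread over all moduli $|\xi|$. The $s$-average smooths only along the single direction $\nu$, i.e.\ it yields exactly the angular (conic) localization that $\fS_{n,\nu}$ encodes, and nothing about $|\xi|$; no commutator estimate with powers of $n$ can repair this, because it is $a$, not $\vth_n$ or the mollification, that destroys the radial localization. (As written your set $\Xi_{j,\nu}$ is even empty: $I-P_{j-n+\ell(n)}$ damps frequencies $|\xi|\lesssim 2^{-(j-n+\ell(n))}=2^{\,n-\ell(n)}2^{-j}$, a threshold far above your upper bound $2^{-j+C}$.) Consequently the almost-orthogonality in $j$ for fixed $\nu$ has no basis. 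The paper uses no $j$-orthogonality at all: after pulling out the multipliers $\fS_{n,\nu}$ via the overlap bound \eqref{overlap}, it dominates the kernel of $(I-P_{j-n+\ell(n)})T_j^{n,\nu}$ pointwise by the normalized indicator $H_j^{n,\nu}$ of a $2^{j}\times(2^{j-n\gamma})^{d-1}$ tube, expands the square as $\sum_j\sum_{i\le j}$, and uses only $\|H_i^{n,\nu}\|_1\lesssim 2^{-n\gamma(d-1)}$, $\|b_Q\|_1\lesssim\la|Q|$ and the disjointness of the Calder\'on--Zygmund cubes; the cancellation of $B_{j-n}$ plays no role there.

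The quantitative bookkeeping is also off in a way that only accidentally produces the right exponent. For $d\ge 3$ the slabs $\{\xi:|\inn{\nu}{\xi/|\xi|}|\lesssim 2^{-n\gamma}n^5\}$ over $2^{-n\gamma}$-separated $\nu$ overlap $\sim 2^{n\gamma(d-2)}n^5$ times (a band around a great $(d-2)$-sphere), not $O(n^5)$; and after the orthogonality step you must still sum the per-piece bounds over $\nu\in\Theta_n$, a factor $\card(\Theta_n)\approx 2^{n\gamma(d-1)}$ which your accounting omits. With the correct per-$\nu$ bound \eqref{fixednubound}, namely $2^{-2n\gamma(d-1)}\la\|f\|_1$ for the whole $j$-sum (a purely geometric gain from the two factors of $\|H^{n,\nu}\|_1$, no cancellation needed), one gets $2^{n\gamma(d-2)}n^5\cdot 2^{n\gamma(d-1)}\cdot 2^{-2n\gamma(d-1)}=n^5 2^{-n\gamma}$; your claimed single-piece bound $2^{-n\gamma}\la\|B_{j-n}\|_1$ matches the final answer only because the missing $\card(\Theta_n)$ and the misstated overlap cancel against it. Finally, the mechanism you propose for that single-piece bound — cancellation of $b_Q$ against slow variation of $\fS_{n,\nu}^*h$ in the $\nu$-direction at frequency $\lesssim 2^{-n\gamma}n^{-5}2^{-j}$ — is unsupported: for arbitrary $h\in L^2$ there is no bound on $|\xi|$, and $\fS_{n,\nu}$ constrains only the angle $\inn{\nu}{\xi/|\xi|}$, not the frequency component $\inn{\nu}{\xi}$ itself.
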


Given the propositions we can finish the outline of the proof of Theorem
\ref{maintheorem}.
Namely by Tshebyshev's inequality,
\begin{align*}
\meas\,&\big(\big\{ x: \big|\sum_{n>n(\eps)}\sum_j (I-P_{j-n+\ell(n)} )
T_j^n B_{j-n}(x)\big| >\frac{3}{5}\la\big\}\big)
\\ &\lc\,
5\la^{-1}\Big\|\sum_{n>n(\eps)}\sum_{\nu\in \Theta_n}\sum_{j} (I-P_{j-n+\ell(n)} )  (I-\fS_{n,\nu})
T_j^{n,\nu} B_{j-n}\Big\|_1
\\
&+\,25\la^{-2}\Big\|\sum_{n>n(\eps)}\sum_{\nu\in \Theta_n}\sum_{j} 
(I-P_{j-n+\ell(n)} ) \fS_{n,\nu} T_j^{n,\nu} B_{j-n}\Big\|_2^2
\end{align*}
and by the propositions and Minkowski's inequality  this is bounded by
a constant times
$$
\la^{-1}\|f\|_1
\Big( \sum_{n} n^{-2} 2^{-n\gamma(d-1)} \card (\Theta_n)
+
\sum_{n}2^{-n\gamma}n^5 \Big)\lc  \la^{-1}\|f\|_1\,.
$$

\section{Proof of Proposition \ref{secjunkL1}}
\label{easyest}
Let $Q\in \fQ_\la$  with $L(Q)=j-n$.
We apply Fubini's theorem
and write
\begin{align*}
&P_{j-n+\ell(n)} T^n_j b_Q(x) \,=\, \int\vth_n(s)\,\int b_Q(y) \times \\
&\qquad\Big[\int\Phi_{j-n+\ell(n)}(x-w) K^n_j(w-y)
a(sw+(1-s)y)\, dw\Big]\, dy \, ds\,.
\end{align*}
Changing  variables
$z= w+ \tfrac{1-s}{s} y$ we get
$$P_{j-n+\ell(n)} T^n_j b_Q(x)
=\int\vth_n(s)\int a(sz) \int \cA^{x,z,s}_{j,n}(y)
b_Q(y) \, dy
\, dz\, ds
$$
where
$$\cA^{x,z,s}_{j,n}(y)=
\Phi_{j-n+\ell(n)}(x-z+\tfrac{1-s}{s} y) K^n_j(z-\tfrac{y}{s}). $$

We expand $\cA^{x,z,s}_{j,n}(y)$ about the center $y_Q$ of $Q$ and in view of the
cancellation of $b_Q$
 we  may   write
\begin{multline*}
|P_{j-n+\ell(n)}
T^n_j b_Q(x)| \\
\le \iint |\vth_n(s) a(sz)|\,\Big|\int
\big(\cA^{x,z,s}_{j,n}(y)-\cA^{x,z,s}_{j,n}(y_Q)\big)
b_Q(y) \, dy \Big| dz\, ds\,.
\end{multline*}
Using  $$\cA^{x,z,s}_{j,n}(y)-\cA^{x,z,s}_{j,n}(y_Q)\,=\,
\biginn{y-y_Q}{\int_0^1 \nabla \cA^{x,z,s}_{j,n}(y_Q+\sigma(y-y_Q)) \, d\sigma}
$$ in the previous display one  obtains
after applying Fubini's theorem
\begin{align*}
&\|P_{j-n+\ell(n)}
T^n_j b_Q(x)\|_1
\le \diam(Q)\, \int_0^1 \int |\vth_n(s)|
\times
\\ &\qquad\Big[
\|\nabla \Phi_{j-n+\ell(n)}\|_1
\frac{1-s}{s}
\int |b_Q(y)| 
\int \big|K^n_j(z-\tfrac{y_Q+\sigma(y-y_Q)}{s})\big|\, dz \, 
dy\,
 \\&\quad +\| \Phi_{j-n+\ell(n)}\|_1
\int |b_Q(y)|  \int \frac{1}{s}\big|\nabla K^n_j(z-\tfrac{y_Q+\sigma(y-y_Q)}{s})\big| \, dz\,
dy \Big]\,  ds\, d\sigma\,.
\end{align*}
Now use 
$\|\nabla K^n_j\|_1\lc 2^{-j+\ell_\eps(n)}$
and $\int_0^1|\vth_n(s)| s^{-1} ds \lc \log n$, and since $\diam (Q)\lc 2^{j-n}$  we obtain
\begin{align*}
\big\|P_{j-n+\ell(n)}
T^n_j b_Q\big\|_1
&\lc \log n\,\big[ 2^{-\ell(n)}+2^{\ell_\eps(n)-n} \big] \|b_Q\|_1
\\
&\lc n^{-2} \log n\,\|b_Q\|_1.
\end{align*}
Finally we  sum over all $Q\in \fQ_\la$ with $L(Q)=j-n$ to obtain the asserted bound.
 \qed




\section{Proof of Proposition \ref{L1prop}}\label{proofofL1prop}
Let $Q\in \fQ_\la$ with $L(Q)=j-n$, and let $y_Q$ be the center of $Q$.
Fix a unit vector $\nu$, and let $\pi_\nu^\perp$ be the projection to the orthogonal complement of $\nu$, i.e. $\pi_\nu^\perp(x)=x-\inn{x}{\nu}\nu$.
In view of the support properties of the kernel it suffices to show that
for $n>n(\eps)$
\Be \label{fixnuest}\Big\|(I-P_{j-n+\ell(n)} ) (I-\fS_{n,\nu})
T_j^{n,\nu} b_Q\Big\|_1 \lc
n^{-2}  2^{-n\gamma(d-1)} \|b_Q\|_1\,,
\Ee
 under  the additional assumption that  the support of $a$ is contained in
$$
\big\{ y: |\inn{y-y_Q}{\nu}|\le 2^{j+4}d\,,\,
|\pi_\nu^\perp(y-y_Q)|\le 2^{j+4-n\gamma}d\,
\big \}\,.$$
Note that with this hypothesis
\Be \label{FTaest}\|\widehat a\|_\infty \lc 2^{jd-n\gamma(d-1)}\,.\Ee

We introduce a frequency  decomposition of $a$. Let $\varphi$ be a radial $C^\infty$ function as in \S\ref{decompsect}, but now defined
in $\xi$-space, so that $\varphi(\xi)=1$ for $|\xi|\le 1$
and $\varphi(\xi)=0$
for $|\xi|\ge 6/5$. Define $\beta_k(\xi)=
\varphi(2^k\xi)-\varphi(2^{k+1}\xi)$; then  $\beta_k$ is supported in $\{\xi: 2^{-k-1}\le|\xi|\le  \tfrac 65 2^{-k}\}$.
Let $\widetilde \beta$ be a radial
$C^\infty$ function
so that $\widetilde \beta$ is supported in $\{\xi: 1/3\le |\xi|\le 3/2\}$ and
$\widetilde \beta(\xi)=1$ for $1/2\le |\xi|\le 6/5$, and define
$\widetilde \beta_k(\xi)= \widetilde \beta(2^k\xi)$. Then $\beta_k\widetilde\beta_k=\beta_k$.
Define convolution operators $V_k$,
$\La_k$,
$\widetilde \La_k$ with Fourier multipliers
$\varphi(2^k\cdot)$,
$\beta_k$, $\widetilde \beta_k$, respectively;
then
$\La_k\widetilde \La_k=\La_k$ and, for every $m\in \bbZ$,  the identity operator is decomposed as
$I=V_m+\sum_{k<m} \La_k$.

For fixed $y\in Q$ we define an operator
$\sK^{n,\nu}_{j,y} $  acting on $a$  by
$$\sK^{n,\nu}_{j,y}[a](x) =
K^{n,\nu}_j(x-y) \int\vth_n(s) a(sx+(1-s)y) ds\,
$$
so that
\Be\label{integralbQ}T^{n,\nu}_{j}b_Q(x)= \int b_Q(y) \sK^{n,\nu}_{j,y}[a](x)\, dy\,.\Ee

We use dyadic frequency decompositions and split
\begin{multline}\label{split}
(I-\fS_{n,\nu})(I-P_{j-n+\ell(n)} ) T_j^{n,\nu} b_Q =\\
\sum_{k_1} \La_{k_1}
(I-\fS_{n,\nu}) \widetilde \La_{k_1}(I-P_{j-n+\ell(n)} )
\int b_Q(y) \sK_{j,y}^{n,\nu} [a]\, dy
\end{multline}
and then further split in \eqref{split}
\Be\label{asplit}a= V_{j-n+\ell(n)}a+\sum_{k_2<j-n+\ell(n)} \La_{k_2}a\,.
\Ee

We  prove three  lemmata with various bounds for the terms in
\eqref{split}, \eqref{asplit}.

\begin{lemma} \label{firstcancel}
$$\Big
\|\int b_Q(y) \sK^{n,\nu}_{j,y}[V_{j-n+\ell(n)}a]\,dy\Big\|_1 \lc  n^{-2} 2^{-n\gamma(d-1)}
 \|b_Q\|_1\,.
$$
\end{lemma}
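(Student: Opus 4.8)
The plan is to use the vanishing mean $\int b_Q=0$ together with the smoothness in $y$ of the map $y\mapsto\sK^{n,\nu}_{j,y}[\widetilde a]$, where $\widetilde a:=V_{j-n+\ell(n)}a$. Since $V_m$ is convolution with $2^{-md}\check\varphi(2^{-m}\cdot)$, $\check\varphi\in\cS$, the normalizations $A\le1$, $\|a\|_\infty\le1$ give $\|\widetilde a\|_\infty\lc1$ and $\|\nabla\widetilde a\|_\infty\lc2^{-(j-n+\ell(n))}$. Writing
$$\int b_Q(y)\,\sK^{n,\nu}_{j,y}[\widetilde a](x)\,dy=\int b_Q(y)\big(\sK^{n,\nu}_{j,y}[\widetilde a](x)-\sK^{n,\nu}_{j,y_Q}[\widetilde a](x)\big)\,dy,$$
applying the fundamental theorem of calculus in $y$ along the segment from $y_Q$ to $y$, and moving the $L^1_x$ norm inside by Minkowski's integral inequality, one gets
$$\Big\|\int b_Q(y)\,\sK^{n,\nu}_{j,y}[\widetilde a]\,dy\Big\|_1\le\diam(Q)\,\|b_Q\|_1\,\sup_{y'\in Q}\big\|\nabla_{y'}\sK^{n,\nu}_{j,y'}[\widetilde a]\big\|_1.$$
Since $\diam(Q)\lc2^{j-n}$, it suffices to show the supremum is $\lc n^{-2}2^{-n\gamma(d-1)}2^{-(j-n)}$.

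To this end, differentiate $\sK^{n,\nu}_{j,y'}[\widetilde a](x)=K^{n,\nu}_j(x-y')\int\vth_n(s)\,\widetilde a(sx+(1-s)y')\,ds$ in $y'$: the product rule produces one term containing $(\nabla K^{n,\nu}_j)(x-y')$ and one containing $K^{n,\nu}_j(x-y')$ together with $(1-s)(\nabla\widetilde a)(sx+(1-s)y')$ inside the $s$-integral. In both terms, bounding the composed factor $\widetilde a$ or $\nabla\widetilde a$ by its sup norm and $|1-s|\le1$ on $\supp\vth_n$, one is left with $\|\vth_n\|_1\big(\|\widetilde a\|_\infty\|\nabla K^{n,\nu}_j\|_1+\|\nabla\widetilde a\|_\infty\|K^{n,\nu}_j\|_1\big)$, where $\|\vth_n\|_1\lc1$ by \eqref{thetaest} and the support of $\vth_n$. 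Now $K^{n,\nu}_j=K^n_j\chi_{n,\nu}$ is supported in the intersection of $\{2^{j-2}\le|x|\le2^{j+2}\}$ with the cone of aperture $\lc2^{-n\gamma}$ around $\nu$, a set of measure $\lc2^{jd}2^{-n\gamma(d-1)}$; combining this with $|K^n_j|\lc2^{-jd}$ and $|\nabla K^n_j|\lc2^{-jd}2^{\ell_\eps(n)-j}$ from \eqref{Kjnder}, and $|\nabla\chi_{n,\nu}(x)|\lc2^{n\gamma}|x|^{-1}$, yields
$$\|K^{n,\nu}_j\|_1\lc2^{-n\gamma(d-1)},\qquad\|\nabla K^{n,\nu}_j\|_1\lc2^{-n\gamma(d-1)}\big(2^{\ell_\eps(n)-j}+2^{n\gamma-j}\big).$$

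Inserting these bounds, together with $\|\widetilde a\|_\infty\lc1$ and $\|\nabla\widetilde a\|_\infty\lc2^{-(j-n+\ell(n))}$, and multiplying by $\diam(Q)\lc2^{j-n}$, bounds the left side of the lemma by
$$2^{-n\gamma(d-1)}\big(2^{\ell_\eps(n)-n}+2^{-n(1-\gamma)}+2^{-\ell(n)}\big)\|b_Q\|_1.$$
Finally $2^{-\ell(n)}\le n^{-2}$ since $\ell(n)\ge2\log_2n$; $2^{-n(1-\gamma)}\lc n^{-2}$ since $1-\gamma>0$; and $2^{\ell_\eps(n)-n}\le4\,n^{2/\eps}2^{-n}\lc n^{-2}$ for $n>n(\eps)$, by the choice of $n(\eps)$. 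This gives the claimed estimate. The mechanism — vanishing mean of $b_Q$ tested against the smoothness of $\widetilde a$ and of $K^n_j$ at scale $2^{j-\ell_\eps(n)}$ — is entirely standard; the only real work is the bookkeeping, namely extracting the beam geometry of $\supp K^{n,\nu}_j$ so the gain $2^{-n\gamma(d-1)}$ becomes visible, correctly accounting for the angular derivative loss $2^{n\gamma}$ in $\nabla\chi_{n,\nu}$, and the elementary numerology comparing $\ell(n),\ell_\eps(n)$ and $n$ at the end.
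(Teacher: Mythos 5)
Your proof is correct and follows essentially the same route as the paper: exploit $\int b_Q=0$, expand $\sK^{n,\nu}_{j,y}[V_{j-n+\ell(n)}a]$ about $y_Q$ via the fundamental theorem of calculus, and control the two resulting terms by $\|\nabla K^{n,\nu}_j\|_1\lc 2^{-n\gamma(d-1)}(2^{\ell_\eps(n)-j}+2^{n\gamma-j})$ and $\|K^{n,\nu}_j\|_1\|\nabla V_{j-n+\ell(n)}a\|_\infty\lc 2^{-n\gamma(d-1)}2^{n-j-\ell(n)}$, exactly the paper's $\cE_1$ and $\cE_2$ (the paper phrases the second via $\|\nabla\chi_{j-n+\ell(n)}\|_1$, which is the same bound). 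The concluding numerology ($2^{-\ell(n)}\le n^{-2}$, $2^{n(\gamma-1)}\lc n^{-2}$, $2^{\ell_\eps(n)-n}\lc n^{-2}$ for $n>n(\eps)$) also matches.
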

\begin{proof}
We use the cancellation of $b_Q$ to estimate the left-hand side by
$$\int|b_Q(y) |
\int|\sK^{n,\nu}_{j,y}[V_{j-n+\ell(n)}a](x)
-\sK^{n,\nu}_{j,y_Q}[V_{j-n+\ell(n)}a](x) | \,dx  \, dy\,.$$
For $y\in Q$ we may estimate
$$\int |\sK^{n,\nu}_{j,y}[V_{j-n+\ell(n)}a](x)
-\sK^{n,\nu}_{j,y_Q}[V_{j-n+\ell(n)}a](x) | \,dx  \le \cE_1(y)+\cE_2(y)
$$
where
$$\cE_1(y)=
\|V_{j-n+\ell(n)}a\|_\infty
\int|K^{n,\nu}_{j}(x-y) -K^{n,\nu}_{j}(x-y_Q)|\,dx
$$ and, abbreviating
\begin{multline*}
\Gamma^Q_{j-n+\ell(n)}
(x,y,z)\,=\,
\\
\int_{0}^1 \biginn{y-y_Q}{\nabla\cF[\varphi(2^{j-m+\ell(n)}\cdot)] 
(sx+(1-s)(y_Q+\sigma(y-y_Q))-z)}\,d\sigma,
\end{multline*}
$\cE_2$ is given by
$$\cE_2(y)=
\int |K^{n,\nu}_j(x-y_Q)| \int|\vth_n(s)|   \int |a(z)|\,|\Gamma^Q_{j-n+\ell(n)}
(x,y,z)| \, dz\, ds\, dx.
$$

Now by \eqref{Kjnder},  and since  $|\partial_x \chi_{n,\nu}(x)|\lc 2^{n\gamma}|x|^{-1}$
we get
$$
|\cE_1(y)|\le |y-y_Q|\|\nabla K^{n,\nu}_j\|_1
\lc 2^{j-n}[ 2^{\ell_\eps(n)-j}+ 2^{n\gamma-j}] 2^{-n\gamma(d-1)}.
$$
Notice that for $n>n(\eps)$ and $\gamma> 1/10$ we have 
$2^{\ell_\eps(n)} \lc 2^{n\gamma}$ and thus we see that
$|\cE_1(y)|\lc 2^{-n\gamma(d-1)} n^{-2}$.
Moreover, with  $\chi_k:=\cF^{-1}[\varphi(2^k\cdot)]$,
$$
|\cE_2(y)|\lc \|K^{n,\nu}_j\|_1 |y-y_Q|\|\nabla \chi_{j-n+\ell(n)}\|_1 \lc
2^{-n\gamma(d-1)} 2^{j-n} 2^{n-j-\ell(n)}
$$
which is 
$\lc 2^{-n\gamma(d-1)} n^{-2}$. Integrating in $y$, we get
$$\int \big(|\cE_1(y)|+|\cE_2(y)|\big)\,|b_Q(y)| dy 
\,\lc\,2^{-n\gamma(d-1)} n^{-2} \|b_Q\|_1,$$
and the assertion follows.
\end{proof}

\begin{lemma} \label{VandLP}
Let $y\in Q$ and  $a$ be  as in \eqref{FTaest}.

(i) Let $k_1> k_2+\ell(n)+10$. Then
$$\big\|\La_{k_1}\sK^{n,\nu}_{j,y}
[\La_{k_2}a]
\big\|_1 \\
\le  C_N 2^{-n\gamma(d-1)} \min\{1,
  n^{2d+2N}2^{n\gamma} 2^{(k_2-j+n\gamma)N}\}
$$

(ii) Let  $k_1<k_2-10$. Then
\begin{multline*}\big\|\La_{k_1}\sK^{n,\nu}_{j,y}
[\La_{k_2}a]
\big\|_1 \,+\,\big\|\La_{k_1}\sK^{n,\nu}_{j,y}
[V_{k_2}a]
\big\|_1 \\
\le  C_N 2^{-n\gamma(d-1)} \min\{1,
  2^{n\gamma} 2^{(k_1-k_2)d} 2^{(k_1-j+n\gamma)N} \}\,.
\end{multline*}
\end{lemma}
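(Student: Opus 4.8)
The plan is to prove Lemma \ref{VandLP} by exploiting the mismatch of frequency supports between the two factors $\La_{k_1}$ and $\La_{k_2}$ (or $V_{k_2}$) in the composition. The trivial bound $\lc 2^{-n\gamma(d-1)}$ in each case comes directly from $\|K_j^{n,\nu}\|_1\lc 2^{-n\gamma(d-1)}$ (since $\chi_{n,\nu}$ localizes to a cone of aperture $\sim 2^{-n\gamma}$, contributing the factor $2^{-n\gamma(d-1)}$ in the measure of the support annulus) together with the $L^\infty\to L^\infty$ boundedness of $\La_{k_1}$, $\La_{k_2}$, $V_{k_2}$ on $L^\infty$ and the boundedness of $a$; so all the work is in the \emph{gain} factor. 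First I would record precise estimates on the kernel $\sK^{n,\nu}_{j,y}[\cdot]$ acting on functions: writing $\sK^{n,\nu}_{j,y}[g](x)=K^{n,\nu}_j(x-y)\int\vth_n(s)g(sx+(1-s)y)\,ds$, the operator $g\mapsto \sK^{n,\nu}_{j,y}[g]$ has $L^1$-norm $\lc 2^{-n\gamma(d-1)}$ uniformly, and more importantly commutes cleanly with derivatives: applying a derivative $\partial^\alpha_x$ to $\sK^{n,\nu}_{j,y}[g]$ produces terms that either differentiate $K^{n,\nu}_j$ (costing $2^{(\ell_\eps(n)+n\gamma-j)|\alpha|}$ by \eqref{Kjnder} and the estimate on $\partial^\alpha\chi_{n,\nu}$, with the $2^{\ell_\eps(n)}\lc 2^{n\gamma}$ and $n^2\lc 2^{n\gamma/5}$-type bookkeeping available since $n>n(\eps)$) or differentiate $g$ at the point $sx+(1-s)y$ and pick up an extra factor $s\le 1$.

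For part (i), where $k_1>k_2+\ell(n)+10$ (so the output frequency $\sim 2^{-k_1}$ is \emph{much smaller} than the input frequency $\sim 2^{-k_2}$, i.e. the output is much coarser), I would use the cancellation of $\La_{k_2}a$: since $\widehat{\La_{k_2}a}$ is supported in $|\xi|\sim 2^{-k_2}$, we have $\La_{k_2}a=(-\Delta)^{-M}\La_{k_2}[(-\Delta)^M\La_{k_2}a]$ up to harmless factors, and $\|(-\Delta)^M\La_{k_2}a\|_\infty\lc 2^{2k_2M}\|a\|_\infty\cdot$(the support-size factor already absorbed). More cleanly: write $\La_{k_2}a = 2^{-2k_2 M}\widetilde\La_{k_2}\Delta^M (\La_{k_2}a)$ type identity and integrate by parts in the convolution. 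The key point is that $\La_{k_1}$ applied to $\sK^{n,\nu}_{j,y}[\La_{k_2}a]$ — where $\La_{k_1}$ has a very low-frequency multiplier — sees the oscillation of $\La_{k_2}a$: substituting $u=sx+(1-s)y$ in the inner integral, the function $x\mapsto \int\vth_n(s)(\La_{k_2}a)(sx+(1-s)y)\,ds$ is essentially frequency-localized to $|\xi|\gtrsim 2^{-k_2}$ in the radial-ish directions, while $K^{n,\nu}_j$ has Fourier support essentially in a ball of radius $2^{-j+n\gamma+\ell_\eps(n)}$ (by \eqref{Kjnder}); so the product has frequency $\gtrsim 2^{-k_2}$, and $\La_{k_1}$ of a function with frequency $\gtrsim 2^{-k_2}$ combined with an almost-bandlimited second factor gains $2^{(k_1-k_2)N}$ for each $N$ after integrating by parts $N$ times, at the cost of $N$ derivatives landing on $K^{n,\nu}_j$ or $\chi_{n,\nu}$, each worth $2^{-j+n\gamma+\ell_\eps(n)}\cdot 2^{k_1}$; balancing and absorbing the $\ell_\eps(n),\ell(n)$ factors into powers of $n$ gives the claimed $\min\{1,n^{2d+2N}2^{n\gamma}2^{(k_2-j+n\gamma)N}\}$, where the loss $n^{2d+2N}$ collects the $\vth_n$-derivative bounds \eqref{thetaest} and the $\ell_\eps(n)$-exponents.

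For part (ii), where $k_1<k_2-10$ (output much \emph{finer} than input, or input very low frequency via $V_{k_2}$), I would instead use the smoothness of $\La_{k_2}a$ or $V_{k_2}a$ directly: this function and all its derivatives are controlled, $\|\partial^\alpha (\La_{k_2}a)\|_\infty \lc 2^{-n\gamma(d-1)}2^{k_2d}2^{-|\alpha|k_2}2^{jd}$-type (absorbing the $\widehat a$ bound \eqref{FTaest}), so $\sK^{n,\nu}_{j,y}[\La_{k_2}a]$ is a smooth function at scale $2^{k_2}$ times $K^{n,\nu}_j$; then $\La_{k_1}$ at a finer scale $2^{-k_1}\gg 2^{-k_2}$ can only see the roughness coming from $K^{n,\nu}_j$, which has frequency $\lesssim 2^{-j+n\gamma+\ell_\eps(n)}$, so $\La_{k_1}$ essentially annihilates it unless $2^{-k_1}\lesssim 2^{-j+n\gamma}$, i.e. we gain $2^{(k_1-j+n\gamma)N}$ per integration by parts, and the extra factor $2^{(k_1-k_2)d}$ comes from the fact that $\widehat{\La_{k_1}}$ restricted to a ball of radius $2^{-k_1}$ meeting the support of $\widehat{\La_{k_2}}$-scaled data contributes volume $\sim 2^{(k_1-k_2)d}$ (equivalently, $\|\La_{k_1}h\|_1\lc 2^{(k_1-k_2)d}\|h\|_1$ for $h$ at frequency $\sim 2^{-k_2}$, a standard Bernstein-type bound). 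Assembling these via Young's inequality and the uniform $L^1$ bound on $K^{n,\nu}_j$ yields the stated estimate.

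The main obstacle I anticipate is the bookkeeping of the many small parameters: the scales $\ell(n)$, $\ell_\eps(n)$, $n\gamma$, and the polynomial-in-$n$ losses from $\vth_n$ (via \eqref{thetaest}), all of which must be tracked so that after the final summations in $k_1,k_2,j$ (and eventually $\nu,n$) everything converges with the $n^{-2}2^{-n\gamma(d-1)}$ decay claimed in Proposition \ref{L1prop}. The analytic heart — that $\La_{k_1}$ and the other frequency projection genuinely commute past the inner integral $\int\vth_n(s)g(sx+(1-s)y)\,ds$ well enough to transfer oscillation — is conceptually straightforward once one substitutes $u=sx+(1-s)y$ and differentiates under the integral sign, the $s$-integral only contributing bounded factors (and the $\vth_n$-derivative losses), but making the two frequency-support heuristics above rigorous requires writing $\La_{k_1}$ as convolution with $2^{-k_1 d}\chi(2^{-k_1}\cdot)$ and integrating by parts the stated number of times, carefully distributing derivatives between $K^{n,\nu}_j$, $\chi_{n,\nu}$, $\vth_n$, and $\La_{k_2}a$.
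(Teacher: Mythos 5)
Your overall strategy (trivial bound from $\|K_j^{n,\nu}\|_1\lc 2^{-n\gamma(d-1)}$, gain from the frequency mismatch between $\La_{k_1}$ and $\La_{k_2}a$ via integration by parts, with \eqref{FTaest} entering the prefactors) is the same as the paper's, but the quantitative mechanism you describe has a genuine gap in part (i). Your accounting --- ``gains $2^{(k_1-k_2)N}$ \dots at the cost of $N$ derivatives landing on $K^{n,\nu}_j$ or $\chi_{n,\nu}$, each worth $2^{-j+n\gamma+\ell_\eps(n)}\cdot 2^{k_1}$'' --- combines to a factor $2^{(2k_1-k_2-j+n\gamma+\ell_\eps(n))N}$, which is not the asserted $k_1$-uniform factor $2^{(k_2-j+n\gamma)N}$ and grows with $k_1$. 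More importantly, the literal implementation you propose (write $\La_{k_1}$ as convolution in physical space and ``carefully distribute derivatives'' among the factors) cannot give the stated bound: in that representation the integrations by parts against the oscillation $e^{is\inn{x'}{\eta}}$, $|s\eta|\gc n^{-2}2^{-k_2}$, inevitably also put derivatives on the kernel of $\La_{k_1}$, and those terms only yield a factor $\approx 2^{k_2+\ell(n)-k_1}$ per derivative. That factor is bounded (by $2^{-10}$ under the hypothesis $k_1>k_2+\ell(n)+10$) but carries no decay in $n$, whereas the application in Proposition \ref{L1prop} needs, uniformly in $k_1\ge j-n+\ell(n)-10$ (the range where the $k_1$-sum is controlled only by the $O(1)$ geometric series from \eqref{cancleft}), the factor $2^{(k_2-j+n\gamma)N}$ whose $k_2$-sum produces the super-exponential decay $2^{n(\gamma-1)N}$ that absorbs the $2^{Cn\gamma}$ and $n^{C/\eps}$ losses; with $2^{(k_2+\ell(n)-k_1)N}$ in its place the block sum is only $O(2^{-10N})$ and the estimate \eqref{ksums} fails. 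The paper avoids this by passing to the Fourier side in both $a$ and the multiplier of $\La_{k_1}$, so that the phase in the kernel variable $w$ is $\inn{w}{s\eta-\xi}$ with $\beta_{k_1}(\xi)$, $\beta_{k_2}(\eta)\widehat a(\eta)$ independent of $w$; one then checks the lower bound $|\xi-s\eta|\gc 2^{-k_2-\ell(n)}$ in case (i) (this uses $s\ge n^{-2}$ on $\supp\vth_n$, and is exactly why the threshold is $k_2+\ell(n)+10$ and where the $n^{2N}$ loss originates --- a point your sketch never identifies) and $|\xi-s\eta|\gc 2^{-k_1}$ in case (ii), and integrates by parts in $w$ only, so that every derivative falls on $K_j^{n,\nu}$ at cost $2^{n\gamma-j}$ by \eqref{Kjnder} and the bound on $\partial^\alpha\chi_{n,\nu}$; a separate $(I-2^{-2k_1}\Delta_\xi)^{N_1}$ maneuver supplies the $x$-integrability.

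Two further points. First, your justification of the factor $2^{(k_1-k_2)d}$ in part (ii) via a ``Bernstein-type bound $\|\La_{k_1}h\|_1\lc 2^{(k_1-k_2)d}\|h\|_1$ for $h$ at frequency $\sim 2^{-k_2}$'' is not a valid estimate (for exactly band-limited $h$ with $k_1<k_2-10$ one has $\La_{k_1}h=0$, and in general only $\|\La_{k_1}h\|_1\lc\|h\|_1$); in the paper this factor comes from $\|\widehat a\|_\infty\cdot\meas(\supp\beta_{k_2})\lc 2^{(j-k_2)d}2^{-n\gamma(d-1)}$, i.e.\ from \eqref{FTaest} and the support of $\beta_{k_2}$ (equivalently $\|\La_{k_2}a\|_\infty$), not from any property of $\La_{k_1}$. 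Second, several of your exponents are reversed because the paper's convention puts $\supp\beta_k\subset\{|\xi|\sim 2^{-k}\}$ (e.g.\ your $\|(-\Delta)^M\La_{k_2}a\|_\infty\lc 2^{2k_2M}\|a\|_\infty$ and the $2^{k_2d}$ in your derivative bound should be $2^{-2k_2M}$-type and $2^{-k_2d}$); these are fixable, but together with the issues above they mean the sketch as written does not yet establish the lemma.
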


\begin{proof}
Clearly $\|\sK^{n,\nu}_{j,y}[a]\|_1\lc 2^{-n\gamma(d-1)} \|a\|_\infty$, and since the operators $\La_k$, $V_k$ are uniformly bounded we get the bound
$O(2^{-n\gamma(d-1)})$ in (i) and (ii).
We seek to prove the two other  bounds for $\La_{k_1}\sK^{n,\nu}_{j,y}
[\La_{k_2}a]$
under the assumptions
$k_1<k_2-10$, and  $k_1>k_2+\ell(n)+10$. In (ii)  the corresponding estimate
for $\La_{k_1}\sK^{n,\nu}_{j,y}
[V_{k_2}a]$ is entirely analogous and will be omitted.

We use the Fourier inversion formula for $a$ and for the convolution kernel of $\La_{k_1}$,  write \begin{multline*}
\La_{k_1}\sK^{n,\nu}_{j,y}[\La_{k_2}a](x)
=\frac{1}{(2\pi)^{2d}}\int\vth_n(s) \iint
\beta_{k_1}(\xi) \beta_{k_2} (\eta) \,\widehat a(\eta)\,
\times
\\
 \Big[\int_w e^{i (\inn{x-w}{\xi}+ \inn{sw+(1-s)y}{\eta})} K^{n,\nu}_j(w-y) \, dw\,
\Big]\,d\xi\,
d\eta \, ds\,,
\end{multline*}
and  integrate by parts with respect to $w$ and  $\xi$. The integral can then be rewritten as
\footnote{Thanks to Xudong Lai who  pointed out
an error in the original version of this formula.}
\begin{multline*}
\frac{1}{(2\pi)^{2d}}\int\vth_n(s) \int
 \,\beta_{k_2}(\eta)
\widehat a(\eta)\,\int
\Big[\int e^{i (\inn{x-w}{\xi}+ \inn{sw+(1-s)y}{\eta})} \times \\
\frac{(I-2^{-2k_1}\Delta_\xi)^{N_1} [\beta_{k_1}(\xi)|\xi-s\eta|^{-2N_2}]
(-\Delta_w)^{N_2} K^{n,\nu}_j(w-y)}
{(1+2^{-2k_1}|x-w|^2)^{N_1}} 
 \, dw\,
\Big]\,d\xi\,
d\eta \, ds\,,
\end{multline*}
and we choose $N_1=[d/2]+1$.
Note that
for $s\in supp(\vth_n)$,
 $$|\xi-s\eta|\gc \cC(k_1,k_2,n)
:=\begin{cases} 2^{-k_2-\ell(n)} &\text{  if $k_1> k_2+\ell(n)+10$\,,}
\\2^{-k_1-2} &\text{ if  $k_1<k_2-10$\,.} 
\end{cases}
$$
Now $(2^{-k_1}\partial_\xi)^{N_3} \beta_{k_1} =O(1)$ and thus  one computes 
$$
\big|(I-2^{-2k_1}\Delta_\xi)^{N_1} 
[\beta_{k_1}(\xi)|\xi-s\eta|^{-2N_2}]\big|\lc
[\cC(k_1,k_2,n)]^{-N_2}\,.
$$ Moreover
\begin{align*}\|(-\Delta_w)^{N_2} K^{n,\nu}_j\|_1 &\lc 2^{-2N_2j}
(2^{2N_2 n\gamma}+ 2^{2N_2\ell_\eps(n)}) 2^{-n\gamma(d-1)}
\\
&\lc 2^{-n\gamma(d-1)} 2^{2N_2(n\gamma-j)}\,;
\end{align*}

We 
integrate in $\eta$ and use that the size of the support of $\beta_{k_2}$ is $2^{-k_2 d}$.  Then we integrate in $x,\xi$  and use that
$$\int_{\supp(\beta_{k_1})}\int (1+2^{-2k_1}|x-w|^2)^{-N_1} \, dx\,d\xi =O(1)\,.$$
Using \eqref{FTaest} we then get
\begin{align*}
&\big\|\La_{k_1}\sK^{n,\nu}_{j,y}[\La_{k_2}a]\big \|_1\,\lc_{N_2} \,2^{-k_2 d}
\|\widehat a\|_\infty \|(-\Delta)^{N_2} K^{n,\nu}_j\|_1 [\cC(k_1,k_2,n)]^{-2N_2}
\\&\lc_{N_2}\begin{cases}
2^{d\ell(n)-n\gamma(d-2)} 2^{(2N_2-d) (k_2-j+\ell(n)+n\gamma)} \text{ if $k_1>k_2+\ell(n)
+10$\,,}
\\
2^{-n\gamma(d-2)} 2^{(2N_2-d) (k_1-j+n\gamma)} 2^{(k_1-k_2)d}\,\text{ if $k_1<k_2-10$\,.}
\end{cases}
\end{align*}
If we put $N=2N_2-d$ this gives the asserted  bound for
$\|\La_{k_1}\sK^{n,\nu}_{j,y}[\La_{k_2}a]\big \|_1$. For $k_1<k_2-10$ the corresponding expression with
$\La_{k_2}$ replaced by $V_{k_2}$ is estimated in exactly the same way.
\end{proof}

\begin{lemma} \label{VandLPsamek}
 Let
$k_2-10\le k_1\le k_2+\ell(n)+10$. Then
\begin{multline*}\|\La_{k_1}(I-\fS^{n,\nu})\sK^{n,\nu}_{j,y}[\La_{k_2}a]\|_1 \\
\le C_N
2^{-n\gamma(d-1)} \min\{ 1, n^{2(N+d)/\eps}
2^{(d+3)n\gamma} 2^{(k_1-j+n\gamma)N}\}\,
\end{multline*}
for every  $y\in Q$.
\end{lemma}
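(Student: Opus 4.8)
\textit{This is a plan.} The proof is modelled on that of Lemma~\ref{VandLP}; the new feature is that when $|k_1-k_2|\le\ell(n)+10$ the quantity $|\xi-s\eta|$ occurring there need no longer be bounded below, so the decay has to be extracted from $I-\fS_{n,\nu}$ rather than from an integration by parts in the kernel variable. First, the ``$1$'' in the minimum: as in the proof of Lemma~\ref{VandLP} one has $\|\sK^{n,\nu}_{j,y}[\La_{k_2}a]\|_1\lc 2^{-n\gamma(d-1)}$, and $\La_{k_1}(I-\fS_{n,\nu})$ is bounded on $L^1$ with norm $O(1)$: indeed $\La_{k_1}\fS_{n,\nu}$ has symbol $\beta_{k_1}(\xi)\phi\big(2^{n\gamma}n^{-5}\inn\nu{\xi/|\xi|}\big)$, which is adapted to a box of dimensions $2^{-k_1-n\gamma}n^5$ (in the $\nu$--direction) by $2^{-k_1}$ in the remaining directions and hence has an inverse Fourier transform of $L^1$--norm $O(1)$, while $\La_{k_1}$ is trivially $L^1$--bounded. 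This yields the bound $\lc 2^{-n\gamma(d-1)}$; in particular it disposes of the range $k_1-j+n\gamma\ge 0$, where the minimum equals $1$, and reduces matters to the range in which $k_1-j+n\gamma$ is sufficiently negative.

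For that range the plan has three steps. \emph{(i)} Write $\La_{k_1}(I-\fS_{n,\nu})=\partial_\nu^{N}\circ M_N$, where $M_N$ is the Fourier multiplier operator of symbol $\beta_{k_1}(\xi)\big(1-\phi(2^{n\gamma}n^{-5}\inn\nu{\xi/|\xi|})\big)(i\inn\nu\xi)^{-N}$; decomposing the $\xi$--support dyadically in $|\inn\nu\xi|$ (which on it ranges over $[c\,2^{-n\gamma}n^5\,2^{-k_1},\ C\,2^{-k_1}]$) one checks $\|M_N\|_{L^1\to L^1}\lc\big(2^{k_1+n\gamma}n^{-5}\big)^{N}$. \emph{(ii)} Bound the $\nu$--derivatives of $\sK^{n,\nu}_{j,y}[\La_{k_2}a]=K^{n,\nu}_j(\cdot-y)\,g_y$, where $g_y(x):=\int\vth_n(s)(\La_{k_2}a)(sx+(1-s)y)\,ds$. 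The average over $s$ smooths $\La_{k_2}a$ in the direction of $x-y$, which on $\supp K^{n,\nu}_j(\cdot-y)$ lies within angle $2^{-n\gamma}$ of $\nu$: integrating by parts in $s$, using $\tfrac{d}{ds}(\La_{k_2}a)(sx+(1-s)y)=\inn{x-y}{(\nabla\La_{k_2}a)(sx+(1-s)y)}$ and $\|(s\,\vth_n(s))'\|_\infty\lc n^2$,
$$\inn{x-y}{\nabla_x g_y(x)}=-\int\big(s\,\vth_n(s)\big)'\,(\La_{k_2}a)(sx+(1-s)y)\,ds ;$$
writing $\partial_{x_\nu}=\inn{\tfrac{x-y}{|x-y|}}{\nabla_x}+\inn{\nu-\tfrac{x-y}{|x-y|}}{\nabla_x}$ — the first term controlled by the identity and $|x-y|\sim 2^j$, the second of size $O(2^{-n\gamma})$ times a transverse derivative of $g_y$, itself $O(2^{-k_2})$ since $\La_{k_2}a$ is band--limited to $|\eta|\sim 2^{-k_2}$ — and iterating, one gets $|\partial_{x_\nu}^{M}g_y(x)|\lc C^{M}(n^2 2^{-j}+2^{-n\gamma-k_2})^{M}$ on $\supp K^{n,\nu}_j(\cdot-y)$. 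With $\|\partial_\nu^{m}K^{n,\nu}_j\|_1\lc 2^{-n\gamma(d-1)}(2^{\ell_\eps(n)-j})^{m}$ (from \eqref{Kjnder} and $|(\inn\nu\nabla)^{m}\chi_{n,\nu}|\lc|x|^{-m}$), Leibniz's rule and $2^{\ell_\eps(n)}\ge n^2$ then give $\|\partial_{x_\nu}^{N}\sK^{n,\nu}_{j,y}[\La_{k_2}a]\|_1\lc 2^{-n\gamma(d-1)}(2^{\ell_\eps(n)-j}+2^{-n\gamma-k_2})^{N}$. \emph{(iii)} Combining,
$$\big\|\La_{k_1}(I-\fS_{n,\nu})\sK^{n,\nu}_{j,y}[\La_{k_2}a]\big\|_1\le\|M_N\|_{L^1\to L^1}\,\big\|\partial_{x_\nu}^{N}\sK^{n,\nu}_{j,y}[\La_{k_2}a]\big\|_1\lc 2^{-n\gamma(d-1)}\big(2^{k_1-j+\ell_\eps(n)+n\gamma}n^{-5}+2^{k_1-k_2}n^{-5}\big)^{N},$$
and since $2^{\ell_\eps(n)}\le 4\,n^{2/\eps}$ the first summand contributes $\lc 2^{-n\gamma(d-1)}n^{2(N+d)/\eps}2^{(d+3)n\gamma}2^{(k_1-j+n\gamma)N}$, as claimed.

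The step I expect to require the most work is the second summand $2^{k_1-k_2}n^{-5}$ — equivalently, the transverse error terms generated in (ii) by the $2^{-n\gamma}$ aperture of the cone $\supp\chi_{n,\nu}$ — which does not decay as $k_1\to-\infty$ and so cannot be left as above. Such a term carries the factor $\nu-\tfrac{x-y}{|x-y|}$, smooth on $\supp\chi_{n,\nu}$ and of size $O(2^{-n\gamma})$, against a transverse gradient of $g_y$; the plan is to absorb that factor into the modified vector--valued kernel $K^{n,\nu}_j(\cdot-y)\big(\nu-\tfrac{x-y}{|x-y|}\big)$, of $L^1$--norm $\lc 2^{-n\gamma d}$ and with the same anisotropic smoothness as $K^{n,\nu}_j$, to integrate by parts transversally \emph{before} applying $\La_{k_1}(I-\fS_{n,\nu})$, and to iterate: derivatives falling on the kernel gain an extra factor $2^{-j}$ and are negligible, while those falling on $g_y$ re--enter the recursion. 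The delicate point is to organize this recursion so that the total gain is still $2^{(k_1-j+n\gamma)N}$ while the accumulated powers of $2^{n\gamma}$ stay within $2^{(d+3)n\gamma}$. The remaining bookkeeping — the dyadic estimate for $\|M_N\|_{L^1\to L^1}$, the factor $O(n^2)$ per integration by parts in $s$, and the powers $2^{\ell_\eps(n)}\approx n^{2/\eps}$ per kernel differentiation — is routine.
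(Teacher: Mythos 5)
Your reduction to the case where $k_1-j+n\gamma$ is very negative, the $L^1$ bound for $\La_{k_1}(I-\fS_{n,\nu})$, the multiplier estimate for $M_N$, and the $s$-integration-by-parts mechanism in step (ii) are all reasonable, but the argument has a genuine gap exactly at the point you flag, and the gap is fatal rather than technical. Your pointwise bound for $\partial_{x_\nu}^N g_y$ produces, after composing with $M_N$, the summand $2^{-n\gamma(d-1)}\bigl(n^{-5}2^{k_1-k_2}\bigr)^N$. Since in this lemma $k_1\ge k_2-10$, that term has \emph{no} decay in $k_1-j+n\gamma$ at all; it is essentially constant along the band $|k_1-k_2|\lc\ell(n)$. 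The factor $2^{(k_1-j+n\gamma)N}$ in the lemma is not a luxury: in the conclusion of the proof of Proposition \ref{L1prop} it is what makes the sum over the infinitely many pairs $(k_1,k_2)$ with $k_2<j-n+\ell(n)$ converge (geometrically in $k_1-j$), and it supplies the $2^{n(\gamma-1)N}$ that beats the $2^{Cn\gamma}$ and $n^{C/\eps}$ losses in \eqref{ksums}. A bound of size $n^{-5N}$ on the diagonal band cannot be summed over $k_2\to-\infty$ and does not imply the stated inequality. Moreover the proposed repair (absorbing $\nu-\tfrac{x-y}{|x-y|}$ into a vector-valued kernel and iterating transverse integrations by parts) does not address the real obstruction: transverse derivatives of $g_y$ are genuinely of size $2^{-k_2}$, and moving such a derivative onto the output (cost $2^{-k_1}\approx 2^{-k_2}$ on the $\La_{k_1}$ band) or onto the kernel (cost $2^{n\gamma-j}$ per transverse derivative of $\chi_{n,\nu}$) gains nothing of the required form; the smallness in this regime is not visible in pointwise derivative bounds.

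What is missing is a dichotomy in the frequency of $a$ relative to $\nu$, which is how the paper proceeds: introduce the auxiliary cutoff $S^{n,\nu}$ with symbol $\phi\bigl(2^{n\gamma}n^{-2}\inn{\nu}{\eta/|\eta|}\bigr)$ and split $a=S^{n,\nu}a+(I-S^{n,\nu})a$. If $\eta$ is nearly orthogonal to $\nu$ (the $S^{n,\nu}a$ part), then the output can only acquire a $\nu$-frequency component large enough to survive $I-\fS_{n,\nu}$ through the kernel: one has $|\inn{\xi-s\eta}{\nu}|\gc 2^{-k_1-n\gamma}n^5$ as in \eqref{lowerbddenom} (this is where the gap between the thresholds $n^2$ and $n^5$ and the hypothesis $k_1\le k_2+\ell(n)+10$ enter), and one integrates by parts in $w$ with $\inn{\nu}{\nabla_w}$, each step costing only $2^{\ell_\eps(n)-j}$ on $K_j^{n,\nu}$; this yields \eqref{Snnucontr}. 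If instead $\eta$ has a sizable $\nu$-component (the $(I-S^{n,\nu})a$ part), then \eqref{scprdctwitheta} gives $|\inn{w-y}{\eta}|\gc 2^{j-k_2-n\gamma}n^2$ and one integrates by parts in $s$, which is the mechanism you do capture; this yields \eqref{IminusSnnucontr}. Your single Leibniz/pointwise estimate takes the worst of both mechanisms at every angular scale of $\inn{\nu}{\eta/|\eta|}$, which is precisely why the non-decaying term appears; without a frequency splitting of $a$ of this kind (or an equivalent device) there is no indication your recursion can recover the factor $2^{(k_1-j+n\gamma)N}$, so as written the plan does not prove the lemma.
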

\begin{proof}
We may again assume that  \eqref{FTaest} holds.
Define the  convolution operator $S_{n,\nu}$ by
$$\widehat {S^{n,\nu} g}(\eta)= \phi(2^{n\gamma} n^{-2} \inn{\nu}{\tfrac{\eta}{|\eta|}})
\widehat g(\eta)$$ and split $a=  S^{n,\nu} a+(I-S^{n,\nu} )a$.
We shall prove the  following  estimates, 
\begin{multline}
 \label{Snnucontr}\|
\La_{k_1}(I-\fS^{n,\nu})\sK^{n,\nu}_{j,y}[\La_{k_2}S^{n,\nu}a]\|_1\,\\
\le C_N\,
n^{(2\eps^{-1}-4)(N+d)}2^{4n\gamma}2^{(k_1-k_2)d}2^{(k_1-j+n\gamma)N}
\end{multline}
and
\Be \label{IminusSnnucontr}\|
\La_{k_1}(I-\fS^{n,\nu})\sK^{n,\nu}_{j,y}[\La_{k_2}(I-S^{n,\nu})a]\|_1\,
\le C_N\, n^{-5d}2^{4n\gamma}  2^{(k_2-j+n\gamma)N}\, ,
\Ee
which  imply the somewhat weaker estimate asserted in the lemma.

\noi{\it Proof of \eqref{Snnucontr}.} Set
$$b_{k_1,n,\nu}(\xi)=
\beta_{k_1}(\xi)
\big(1- \phi(2^{n\gamma} n^{-5} \inn{\nu}{\tfrac{\xi}{|\xi|}})\big)
$$ and write
\begin{multline*}
(2\pi)^{2d}\La_{k_1}(I-\fS^{n,\nu})\sK^{n,\nu}_{j,y}[\La_{k_2}S^{n,\nu}a](x)\,=\\ \int\vth_n(s) \iint b_{k_1,n,\nu}(\xi)
\beta_{k_2} (\eta) \phi(2^{n\gamma} n^{-2} \inn{\nu}{\tfrac{\eta}{|\eta|}})
\,\widehat a(\eta)
\\ \times
\Big[\int_w e^{i (\inn{x-w}{\xi}+ \inn{sw+(1-s)y}{\eta})} K^{n,\nu}_j(w-y) \, dw\,
\Big]\,d\xi\,
d\eta \, ds\,.
\end{multline*}
If $(\xi,\eta)$ is in the support of the amplitude then for $n>10^{10}$
\begin{align}\notag
&\big|\inn{\xi-s\eta}{\nu}\big| \ge |\xi| \big |\inn{\tfrac{\xi}{|\xi|}}{\nu}\big|
-|\eta| \big |\inn{\tfrac{\eta}{|\eta|}}{\nu}\big|
\\
\notag
&\ge \,|\xi| \big( 2^{-n\gamma-1} n^5 - 2^{|k_1-k_2|+2} 2^{-n\gamma} n^2\big) 
\\
\label{lowerbddenom}
&\,\ge \,|\xi|2^{-n\gamma-1} ( n^5- 8\cdot 2^{\ell(n)+10}n^2)
\ge  \,2^{-k_1-n\gamma } n^5\,.
\end{align}
Now we can integrate by parts as in the proof of Lemma \ref{VandLP},
except we use the directional derivative $\inn {\nu}{\nabla_w}$ instead of
$\Delta_w$.
The above integral is then estimated by
\begin{multline*}
\int \!\!\!\iiint |\beta_{k_2} (\eta)|\,|\widehat a(\eta)|
\big |\phi(2^{n\gamma} n^{-2} \inn{\nu}{\tfrac{\eta}{|\eta|}})\big|
\\ \times
\frac{\big|(I-2^{-2k_1}\Delta_{\xi})^{N_1}
\big[\frac{b_{k_1,n,\nu}(\xi)}{\inn {\xi-s\eta}{\nu}^{N_2}} \big]}
{(1-2^{-2k_1}|x-w|^2)^{N_1}}
|\inn{\nu}{\nabla_w}^{N_2} K^{n,\nu}_j(w-y) |
\, dw\,d\xi\,
d\eta \, ds\,.
\end{multline*}
 Observe that
$$
\big|\partial_\xi^{N_3}b_{k_1,n,\nu}(\xi) \big|
\le C_{N_1} (2^{n\gamma}n^{-5})^{N_3}2^{k_1N_3}
$$
and thus
\Be \label{bk1nnu}
\big|(I-2^{-2k_1}\Delta_\xi)^{N_1}\big[\frac{b_{k_1,n,\nu}(\xi)}
{\inn {\xi-s\eta}{\nu}^{N_2}} \big]
 \big|
\le C_{N_1} (2^{n\gamma}n^{-5})^{2N_1}(2^{-(k_1+n\gamma)}n^{5})^{-N_2}.
\Ee
Moreover,
$$
\big\|\inn{\nu}{\nabla_w}^{N_2} K^{n,\nu}_j\big\|_1\le C_{N_2}
2^{(\ell_\eps(n) -j)N_2}
2^{-n\gamma(d-1)}\,.
$$
We assume $2N_1>d$, integrate in $x$ and $\xi$, and use
\eqref{lowerbddenom}. Then we obtain
\begin{multline*}
 \|
\La_{k_1}(I-\fS^{n,\nu})\sK^{n,\nu}_{j,y}[\La_{k_2}S^{n,\nu}a]\|_1\,\\
\lc_{N_1,N_2}\,
(2^{2n\gamma}n^{-5})^{2N_1} \|\widehat a\|_\infty 2^{-k_2d}
\frac{2^{(\ell_\eps(n)-j)N_2}2^{-n\gamma(d-1)}}{(2^{-k_1-n\gamma}n^5)^{N_2}}.
\end{multline*}
We use  \eqref{FTaest} and that the support
of 
$\eta\mapsto
\beta_{k_2} (\eta)$
has measure $O(2^{-k_2d})$. 
Thus the expression in the
previous display can be crudely estimated by
$$C_{N_1,N_2} n^{(2\eps^{-1}-4)N_2-10N_1}2^{n\gamma(2N_1-d+2)} 2^{(k_1-k_2)d}
2^{(k_1-j+n\gamma)(N_2-d)}
$$
and,  if we chose the integer $N_1\in
\{\frac{d+1}2, \frac{d+2}2\}$
and $N=N_2-d$ we obtain \eqref {Snnucontr}.

\medskip

\noi{\it Proof of \eqref{IminusSnnucontr}.} Set
$$\widetilde b_{k_2,n,\nu}(\eta)=
\beta_{k_2} (\eta) (1-\phi(2^{n\gamma} n^{-2} \inn{\nu}{\tfrac{\eta}{|\eta|}}))
$$
and write
\begin{multline*}
(2\pi)^d\La_{k_1}(I-\fS^{n,\nu})\sK^{n,\nu}_{j,y}[\La_{k_2}(I-S^{n,\nu})a](x)\,\\
\,=\,
\int K^{n,\nu}_j(w-y)
\iint b_{k_1,n,\nu}(\xi) \widetilde b_{k_2,n,\nu}(\eta)
\,\widehat a(\eta)
\\ \times
\Big[\int\vth_n(s)  e^{i (\inn{x-w}{\xi}+ \inn{sw+(1-s)y}{\eta})} \, ds
\Big]\,d\xi\,
d\eta \, dw\, .
\end{multline*}
Now
if $w-y\in \supp (K^{n,\nu}_j)$ then
$\big|\tfrac{w-y}{|w-y|}-\nu\big| \le  2^{-n\gamma}$
and if $\eta \in \supp (\widetilde b_{k_2,n,\nu})$ we get
$$|\inn{w-y}{\eta}|
\ge |w-y|\,\big(\inn{\nu}{\eta}-|\eta|2^{-n\gamma}\big)\ge
|w-y|\,|\eta|2^{-n\gamma} (\tfrac {1}{2}n^2 -1)$$
and hence
\Be\label{scprdctwitheta}
|\inn{w-y}{\eta}|
\ge 2^{j-k_2-n\gamma-4} n^2.
\Ee
Integration by parts with respect to $s$ yields
\begin{multline*}
(2\pi)^d\La_{k_1}(I-\fS^{n,\nu})\sK^{n,\nu}_{j,y}[\La_{k_2}(I-S^{n,\nu})a](x)
\,=\, \\
\int K^{n,\nu}_j(w-y)
\iint \widehat a(\eta)\widetilde b_{k_2,n,\nu}(\eta)
\frac{(I-2^{-2k_1}\Delta_\xi)^{N_1}  b_{k_1,n,\nu}(\xi) }{
(1+2^{-2k_1}|x-w|^2)^{N_1}} e^{i (\inn{x-w}{\xi}+ \inn{y}{\eta})}
\\ \times
\Big[\int\vth_n^{(N_3)}(s) \frac{i^{N_3} e^{is \inn{w-y}{\eta}}}
{\inn{w-y}{\eta}^{N_3}} \, ds
\Big]\,d\xi\,
d\eta \, dw\, .
\end{multline*}
We apply this with  $N_1>d/2$ and, using  \eqref{FTaest},
\eqref{bk1nnu}, and 
\eqref{scprdctwitheta},  obtain 
\begin{align*}
&\|\La_{k_1}(I-\fS^{n,\nu})\sK^{n,\nu}_{j,y}\La_{k_2}(I-S^{n,\nu})a\|_1
\\
&\lc_{N_1, N_3}(2^{n\gamma}n^{-5})^{2N_1}\|K_j^{n,\nu}\|_1
\frac{\|\vth_n^{(N_3)} \|_1}{(2^{j-k_2-n\gamma-4} n^2)^{N_3}}
2^{-k_2d}\|\widehat a\|_\infty
\\
&\lc_{N_1,N_3} n^{-2-10N_1} 2^{n\gamma(2N_1-d+2)} 2^{(k_2-j+n\gamma)(N_3-d)}\,.
\end{align*}
Inequality \eqref{IminusSnnucontr} follows if we choose $N=N_3-d$ and  $N_1\in
\{\frac{d+1}2, \frac{d+2}2\}$.
\end{proof}

\begin{proof} [Proof of Proposition \ref{L1prop}, conclusion]  Let,
for fixed $n,\nu,j$ and for a fixed cube $Q\in\fQ_\la$ with $L(Q)=j-n$,
$$I_{k_1}=\widetilde \La_{k_1}(I-P_{j-n+\ell(n)}) \La_{k_1}(I-\fS_{n,\nu})
\Big[
\int b_Q(y) \sK_{j,y}^{n,\nu}[V_{j-n+\ell(n)}a] dy\Big]\,,
$$ and
$$
II_{k_1,k_2}= \widetilde \La_{k_1}(I-P_{j-n+\ell(n)})
\La_{k_1}(I-\fS_{n,\nu})
\Big[
\int b_Q(y) \sK_{j,y}^{n,\nu}[\La_{k_2}a](x) dy\Big]\,.
$$
By \eqref{split}, \eqref{asplit} it is enough to show that
\Be \label{ksums}\sum_{k_1} \|I_{k_1}\|_1 + \sum_{k_1}\sum_{k_2< j-n+\ell(n)} \|II_{k_1,k_2}\|_1 \lc n^{-2}2^{-\gamma n(d-1)}\,\|b_Q\|_1\,.
\Ee
We have
\Be\label{sectorialL1est}
\|\La_{k_1}(I-\fS_{n,\nu})\|_{L^1\to L^1} \le C
\Ee uniformly in $n,\nu, k_1$, and using the support and cancellation properties of the kernel of $I-P_{j-n+\ell(n)}$ we also have
\Be \label{cancleft} \|\widetilde \La_{k_1}(I-P_{j-n+\ell(n)})\|_{L^1\to L^1}
\lc \min \{1, 2^{j-n+\ell(n)-k_1}\}\,.
\Ee
Lemma \ref{firstcancel} together with \eqref{cancleft}, \eqref{sectorialL1est}
immediately gives 
\Be \label{cancellationsum}\sum_{k_1\ge j-n+\ell(n)-10}
\|I_{k_1}\|_1 \lc n^{-2}2^{-\gamma n(d-1)}\,\|b_Q\|_1.\Ee

It remains to  verify that the other terms satisfy better bounds, namely
\begin{multline} \label{ksumsbetter}\sum_{k_1< j-n+\ell(n)-10} \|I_{k_1}\|_1 + \sum_{k_1}\sum_{k_2< j-n+\ell(n)} \|II_{k_1,k_2}\|_1 \\
\lc C_N n^{A_1N}2^{A_2n} 2^{n(\gamma-1)N}\,\|b_Q\|_1
\end{multline}
for all $N$, and suitable $A_1\le 10d/\eps$, $A_2\le 10$. Choose $N=100d$.
Taking into account that $\gamma\le 9/10$ one may check  that  the bound in \eqref{ksumsbetter} is $\lc n^{-2} 2^{-n\gamma(d-1)}\|b_Q\|_1$ for all $n$ 
with $n^{-1}\log n\le 
10^{-4}\eps/d$, which is satisfied for $n>n(\eps)$.

For the terms involving $I_{k_1}$,  with $k_1\ge j-n+\ell(n)+10$ we get by the
second estimate in part (ii) of Lemma \ref{VandLP}, with $k_2=j-n+\ell(n)$,
\begin{align*}&\sum_{k_1< j-n+\ell(n)-10}
\|I_{k_1}\|_1 \\&\lc_N 2^{-n\gamma (d-2)}
\sum_{k_1< j-n+\ell(n)-10}
2^{(k_1-j+n-\ell(n))d} 2^{(k_1-j+n\gamma)N}\,\|b_Q\|_1
\\&\lc_N 2^{-n\gamma (d-2)} (2^{n(\gamma-1)}n^2)^N
\,\|b_Q\|_1.
\end{align*}
Next consider  $\sum_{k_1,k_2}\|II_{k_1,k_2}\|_1$ where the $k_2$-summation is extended
over $k_2<j-n+\ell(n)$.
For $k_1\ge j-n+\ell-10$
we can sum a geometric series in $k_1$, with a uniform bound, due to \eqref{cancleft}.
By Lemma \ref{VandLP}, part (i)
\begin{align*}
&\sum_{(k_1,k_2): \substack{
k_1\ge j-n+\ell(n)-10\\k_2<\min \{k_1-\ell(n)-10, j-n+\ell(n)\}}}
\|II_{k_1,k_2}\|_1
\\
&\qquad\lc
2^{-n\gamma(d-2)} n^{2d+2N} \sum_{k_2<j-n+\ell(n)}
2^{(k_2-j+n\gamma)N} \|b_Q\|_1
\\&\qquad\lc
2^{-n\gamma(d-2)} n^{2d+4N} 2^{n(\gamma-1)N}\|b_Q\|_1\,,
\end{align*}
and
by  Lemma \ref{VandLPsamek}
\begin{align*}
&\sum_{
\substack{k_1\ge j-n+\ell(n)-10\\ k_1-\ell(n)-10\le k_2\le k_1+10\\ k_2<j-n+\ell(n)}}
\|II_{k_1,k_2}\|_1
\\
&\qquad\lc \|b_Q\|_1\ell(n) n^{2(N+d)/\eps} 2^{4n\gamma}
\sum_{k_1\le j-n+2\ell(n)+10} 2^{(k_1-j+n\gamma)N}
 \\&\qquad\lc \|b_Q\|_1
\log (n) n^{2(N+d)(\eps^{-1}+2)} 2^{n(\gamma-1)N}.
\end{align*}
The case $k_2>k_1+10$ does not occur when $k_1\ge j-n+\ell(n)-10$
because of the restriction
$k_2<j-n+\ell(n)$.  Thus in all   cases of  \eqref{ksumsbetter} which involve 
the restriction $k_1\ge j-n+\ell(n)-10$ we obtain the required estimate.

Now sum the terms $\|II_{k_1,k_2}\|_1$ with $k_1< j-n+\ell-10$.
By Lemma \ref{VandLP}, part (i)
\begin{align*}
&\sum_{(k_1,k_2):\substack{k_1< j-n+\ell(n)-10\\k_2<k_1-\ell(n)-10}}
\|II_{k_1,k_2}\|_1
\\
&\lc n^{2d+2N} 2^{-n\gamma(d-2)}
\sum_{(k_1,k_2):\substack{k_1< j-n+\ell(n)-10\\k_2<k_1-\ell(n)-10}}
2^{(k_2-j+n\gamma)N} \|b_Q\|_1\,,
\\
&\lc n^{2d+2N} 2^{-n\gamma(d-2)} 2^{n(\gamma-1)N} \|b_Q\|_1\,,
\end{align*}
by Lemma \ref{VandLP}, part (ii)
\begin{align*}
&\sum_{(k_1,k_2):\substack{k_1< j-n+\ell(n)-10\\k_1+10<k_2<j-n+\ell(n)-10}}
\|II_{k_1,k_2}\|_1
\\
&\lc 2^{-n\gamma(d-2)} \sum_{k_1< j-n+\ell(n)-10}   2^{(k_1-j+n\gamma)N}
\sum_{k_2>k_1+10}2^{(k_1-k_2)d}\|b_Q\|_1\,,
\\& \lc n^{2N} 2^{-n\gamma(d-2)} 2^{n(\gamma-1)N} \|b_Q\|_1\,,
 \end{align*}
and finally,
by  Lemma \ref{VandLPsamek},
\begin{align*}
&\sum_{(k_1,k_2):\substack {k_1< j-n+\ell(n)-10\\k_1-\ell(n)-10\le k_2\le k_1+10}}
\|II_{k_1,k_2}\|_1
\\
&\lc \log(n) n^{2(N+d)/\eps} 2^{4n\gamma} 
\sum_{k_1\le j-n+\ell(n)}
2^{(k_1-j+n\gamma )N} \|b_Q\|_1\,
\\
&\lc n^{2(N+d)(\eps^{-1}+1)}  2^{4n\gamma} 
 2^{n(\gamma-1)N}  \|b_Q\|_1\,.
\end{align*}
This finishes the proof of \eqref{ksumsbetter}.
\end{proof}

\section{Proof of Proposition \ref{L2prop}}
\label{proofofL2prop}
We use a slightly modified version of an 
 argument in \cite{see}. The main observation is that, for fixed $n> 0$,
we have
\Be\label{overlap}\sup_{\xi\neq 0}\sum_{\nu\in \Theta_n}
|\phi(2^{n\gamma}n^{-5}\inn{\nu}{\tfrac{\xi}{|\xi|}})| \lc
2^{n\gamma (d-2)}n^5
.\Ee
To see this it suffices, by  homogeneity,
 to  take the supremum over all $\xi\in S^{d-1}$.
Now if $|\xi|=1$ and $\phi(2^{n\gamma}n^{-5}\inn{\theta}{\xi})\neq 0$
then  the distance of $\nu$ to the hyperplane
$\xi$ is at most $C n^5 2^{-n\gamma}$
and since the vectors in $\Theta_n$  are $c2^{-n \gamma}$-separated
there are $O(2^{n\gamma (d-2)}n^5)$ such vectors, hence \eqref{overlap} holds.

From  \eqref{overlap} it follows that 
\begin{multline*}\Big\|\sum_{\nu\in \Theta_n} \fS_{n,\nu}
\sum_j(I-P_{j-n+\ell(n)} ) T_j^{n,\nu} B_{j-n}
\Big\|_2^2 \\ \lc  2^{n\gamma (d-2)}n^5
\sum_{\nu\in \Theta_n}\big\|\sum_j(I-P_{j-n+\ell(n)} )
T_j^{n,\nu} B_{j-n}\big\|_2^2
\end{multline*}
and since $\#\Theta_n \lc 2^{n\gamma(d-1)}$ the asserted inequality is a
consequence of
\Be\label{fixednubound}
\Big\|\sum_j(I-P_{j-n+\ell(n)} ) T_j^{n,\nu}
 B_{j-n}\Big\|_2^2 \lc
2^{-2n\gamma(d-1)} \la \|f\|_1
\Ee
for each $\nu\in \Theta_n$.

For the proof of \eqref{fixednubound} the cancellation of $B_{j-n}$ plays no 
role. Let
$$H^{n,\nu}_j(x)= 2^{-jd}  \chi\ci{\tau^{n,\nu}_j}(x).
$$ where $$\tau^{n,\nu}_j= \{x: |\inn{x}{\nu}|\le 2^{j+2}, \,
|x-\inn{x}{\nu}|\le  2^{j+2-\gamma n}\}.$$
Then from \eqref{czsize} we get
$$\big|(I-P_{j-n+\ell(n)} ) T_j^{n,\nu}B_{j-n}(x)| \lc H^{n,\nu}_j*|B_{j-n}|(x).$$
Therefore
\begin{align*}
&\Big\|\sum_j(I-P_{j-n+\ell(n)} ) T_j^{n,\nu}
 B_{j-n}\Big\|_2^2 \\&\lc 2
\sum_j
\int |B_{j-n}(x)|\, \sum_{i\le j} H^{n,\nu}_j*H^{n,\nu}_i
* |B_{i-n}(x)|  \,dx\,.
\end{align*}
Observe that $\|H^{n,\nu}_i\|_1\lc
2^{-id} \meas( \tau^{n,\nu}_i) \lc 2^{-n\gamma(d-1)}$
and thus
$$H^{n,\nu}_j*H^{n,\nu}_i(x) \lc
2^{-n\gamma(d-1)}\,  2^{-jd} 
\chi_{\widetilde\tau^{n,\nu}_j}(x)$$
where $\widetilde \tau^{n,\nu}_j$ is the double
of $\tau^{n,\nu}_j$.
Hence, for each $x\in\bbR^d$, $j\in \bbZ$,
\begin{align*}
\sum_{i\le j} &H^{n,\nu}_j*H^{n,\nu}_i
* |B_{i-n}|(x)
\\
&\lc  2^{-n\gamma (d-1)}  2^{-jd} \sum_{i\le j}\int_{x+\widetilde \tau^{n,\nu}_j}
|B_{i-n}(y)|\,dy
\\
&\lc 2^{-n\gamma(d-1)}  2^{-jd} 
\sum_{i\le j}\sum_{\substack{Q\in \fQ_\la:\\L(Q)=i-n\\Q\cap(x+\widetilde \tau^{n,\nu}_j)\neq\emptyset}}
\int|b_Q(x)|\,dx
\\
&\lc\, 2^{-n\gamma(d-1)} 2^{-jd} 
\la \,\meas(\widetilde \tau^{n,\nu}_j)\,\lc
 2^{-2n\gamma(d-1)} \la\,;
\end{align*}
here we have used $\|b_Q\|_1\lc\la |Q|$, and the disjointness of the interiors of the  cubes $Q$
in $\fQ_\la$.
Thus we get the estimate
$$\Big\|\sum_j(I-P_{j-n+\ell(n)} ) T_j^{n,\nu}
 B_{j-n}\Big\|_2^2 \lc  2^{-2n\gamma(d-1)} \la\,\sum_j\|B_{j-n}\|_1
$$
which yields \eqref{fixednubound}.
\qed

\section{Open problems}
\label{openproblems}

\subsection{{\it Principal value integrals}} \label{pvcommopen}
Let $$\cT_r f(x)\,=\,
 \int_{|x-y|> r}  K(x-y) \int_0^1 a(sx+(1-s) y) \,ds\,
  f(y)\, dy\,.
$$
Our proof shows that the operators $\cT_r$ are of weak type $(1,1)$,
with  uniform bounds; moreover, for $f\in L^1$,
$\cT_r f$ converges in measure to $Tf$ where $T$ is weak type $(1,1)$.
However it is currently open whether the principal value
$\lim_{r\to 0} \cT_r f(x)$ exists for almost every  $x\in \bbR^d$.
By Stein's theorem \cite{stein} this is equivalent to
the open  question whether the maximal singular integral
$\sup_{r>0} |\cT_r f|$
defines an operator of weak type $(1,1)$.

\subsection{{\it Principal value integrals for rough singular convolution operators}}
The question analogous to \ref{pvcommopen} 
 is open  for classical singular integral
operators with rough convolution  kernel $\Omega(y/|y|)|y|^{-d}$ where $\Omega\in L\log L(S^{d-1})$, $d\ge 2$ and
$\int_{S^{d-1}}\Omega(\theta) d\sigma=0$. These operators are known to be of weak type $(1,1)$, \cite{see}, but the a.e. existence of the principal value integrals is open even for $\Omega\in L^\infty(S^{d-1})$.

\subsection{{\it Christ-Journ\'e operators}}
Let $F\in C^\infty(\bbR)$, let $K$ be a
Calder\'on-Zygmund convolution kernel, and let  $a\in L^\infty(\bbR^d)$.
Christ and Journ\'e \cite{ch-j} showed that
the operator  defined for $f\in C^\infty_0(\bbR^d)$  by
$$\cT f(x) = p.v.  \int
F\big(\int_0^1 a(sx+(1-s) y) dt\big)\,K(x-y)
  f(y) dy\,
$$
extends to a bounded operator on $L^p(\bbR^d)$, $1<p<\infty$.
It would be interesting to get the weak type $(1,1)$ inequality
for nonlinear $F$, in dimension  $d\ge 2$.

\end{document}